\theoremstyle{plain}
\newtheorem{theorem}{Theorem}[section]
\newtheorem{lemma}[theorem]{Lemma}
\newtheorem{proposition}[theorem]{Proposition}
\newtheorem{corollary}[theorem]{Corollary}
\theoremstyle{definition}
\newtheorem{definition}[theorem]{Definition}
\theoremstyle{remark}
\newtheorem{remark}[theorem]{Remark}
\newcommand{\tworow}[3]{\left(\begin{array}{#1} #2 \\ #3\end{array}\right)}
\newcommand{\tworowsmall}[2]{\genfrac{(}{)}{0pt}{}{#1}{#2}}
\title{Mixed-norm estimates and symmetric geometric means}
\author{Wayne Grey}
\subjclass[2010]{46E30 (primary), 26B35, 26D15, 46A45 (secondary)}
\begin{document}

\begin{abstract}
The mixed-norm versions of the H\"older and Minkowski integral inequalities are used to produce
new, general estimates involving symmetric geometric means of mixed norms. Various existing
mixed-norm estimates are shown to be simple special cases of these new results.
Examples are also given of applying mixed-norm H\"older and Minkowski to other estimates,
providing much easier proofs.
Finally, the effectiveness of this technique is demonstrated by deriving a new inequality which combines
features from two separate previous results.
\end{abstract}

\maketitle

\section{Introduction}

Although mixed-norm $L^P$ spaces were described by Benedek and Panzone \cite{benedek_panzone}
in 1961, their applications have appeared in the literature at least since Littlewood's 4/3 inequality
\cite{Littlewood1930} in 1930, a fundamental step in bilinearity and a precursor
to Grothendieck's later multilinearity work \cite{Grothendieck-resume}.
This inequality is generalized by the Bohnenblust-Hille inequality, for which
recent advances \cite{BH-hypercontractive} have been achieved through techniques including
mixed-norm estimates.

Fournier \cite{Fournier1987} developed a mixed-norm approach to Sobolev embeddings,
followed by the work of authors including Algervik and Kolyada \cite{AlgervikKolyada2011},
as well as Clavero and Soria \cite{ClaveroSoria2014}. The notion of symmetric mixed-norm
spaces is central to this work, so much so that in \cite{ClaveroSoria2014} they are simply
called ``mixed norm spaces". That paper uses ``Benedek-Panzone spaces" to refer to those
spaces which are called mixed-norm spaces in \cite{benedek_panzone} and here.
Estimates by geometric means of mixed norms, similarly symmetric in the sense that each
mixed norm involved features the same exponents but differently permuted variables,
appear frequently in the literature; see \cite{blei:fractional_cartesian_products},
\cite{BH-hypercontractive}, \cite{PopaSinnamon2013}, and even \cite{Littlewood1930}.

Such estimates are useful, but have often been established by tricky inductions
on the number of variables, using the classical (one-variable) H\"older's inequality and
Minkowski integral inequality.
The difficulty of these proofs not only hinders communication, but makes it harder to
find strong results.
The mixed-norm generalizations of the H\"older inequality \cite{benedek_panzone}
and the Minkowski integral inequality \cite{Fournier1987} can be used to
simplify many arguments, but are too often overlooked.

Section \ref{sec:Minkowski} develops the Minkowski integral inequality for mixed norms.
Although this theorem is known, this description gives a more general statement and perhaps
explains more detail than other available treatments, as well as using notation more
suited to the main results to follow.
(Another description, with different notation, is in the thesis \cite{grey:thesis},
where the appendix gives some of the applications here.)
Section \ref{sec:main} provides the main new results, Theorem \ref{thm:Holder_symmetric}
and Corollary \ref{cor:HM_symmetric}, estimates where the upper bounds are
symmetric geometric means of mixed norms. These give general embeddings of
symmetric mixed-norm spaces into Lebesgue spaces, requiring no more
computation than finding harmonic means.

Section \ref{sec:applications} shows that various known estimates are simple special
cases of these results. Section 5 treats examples where these theorems do not apply,
but mixed-norm H\"older and Minkowski still simplify the proofs.
Finally, Theorem \ref{thm:new_blei_ps} is a new result which combines
features of existing estimates in a more complicated inequality, which is nonetheless
fairly straightforward to establish with mixed-norm techniques.

In some cases, stronger embedding results have been proven than those given here.
For example, Fournier's \cite{Fournier1987} and, together with Blei, \cite{BleiFournier1989}
give embeddings into Lorentz spaces $\ell^{r,1}$, stronger than the embeddings into $\ell^r$
which would be obtained with the methods given here. Milman \cite{Milman1991} uses
interpolation to produce similar embeddings.
Algervik and Kolyada \cite{AlgervikKolyada2011} establish embeddings of symmetric
mixed-norm spaces into Lorentz spaces, and Clavero and Soria \cite{ClaveroSoria2014}
extend this work to more general rearrangement-invariant spaces.
But, while powerful, these results tend to be somewhat restricted, requiring that the
mixed norms be of a particular form or feature certain exponents.
In contrast, the results here apply quite generally, and may be hoped to lead to
stronger future results for Lorentz or other spaces.

\section{Mixed-norm Minkowski integral inequality}
\label{sec:Minkowski}

While the Minkowski integral inequality is fundamentally a mixed-norm inequality in
two variables, it has a natural generalization to mixed norms in more variables.
Fournier developed a mixed-norm Minkowski in \cite{Fournier1987}, giving the
key ideas but stating the theorem for fully-sorted mixed norms. That version is given
here as Corollary \ref{cor:Minkowski_sorted}. This paper also coined the term
``raises" to describe transpositions; this property is given for more
general permutations in Definition \ref{defn:perm_raise_lower}.

\begin{definition}
Let $\left(X_1, \mu_1\right), \ldots, \left(X_n, \mu_n\right)$ be $\sigma$-finite measure spaces,
with the product space $\left(X, \mu\right)$.
For any $p_1, \ldots, p_n \in \left(0,\infty\right]$, we can define a mixed norm of a measurable
function $f(x_1, \ldots, x_n) : X \to \mathbb{C}$ by first specifying
a double $n$-tuple
\begin{equation*}
P = \tworow{cccc}{p_1 & p_2 & \cdots & p_n}{x_1 & x_2 & \cdots & x_n},
\end{equation*}
in terms of which the mixed norm is
\begin{equation*}
\left\| f \right\|_P
= \left( \int_{X_n} \cdots
	\left( \int_{X_1} \left| f(x_1, \ldots, x_n) \right|^{p_1}
	d\mu_1(x_1) \right)^{p_2/p_1}
\cdots d\mu_n(x_n) \right)^{1/p_n},
\end{equation*}
as long as each $p_j < \infty$ (for $j \in \left\{1, \ldots, n\right\}$).
As in classical $L^p$, if any $p_j = \infty$, replace by the essential supremum in that variable.
\end{definition}

\begin{remark}
$\left\|\cdot\right\|_P$ is only a norm when every $p_j \geq 1$; otherwise, the
triangle inequality fails. Unless otherwise specified, however, ``mixed norm"
will be used here to include any $\left\|\cdot\right\|_P$, even if not strictly
speaking a norm.
\end{remark}

Because the value of $\left\| f \right\|_P$ depends only on the
modulus $\left|f\right|$, we need only consider $f \geq 0$.

\begin{definition}
Let $L^+(X)$ denote
the cone of nonnegative measurable functions on $X$.
\end{definition}

\begin{definition}
\label{def:Minkowski_action}
If $\sigma$ is a permutation of $\left\{1, \ldots, n\right\}$ and
$P = \tworowsmall{p_1 \,\cdots\, p_n}{x_1 \,\cdots\, x_n}$, then
\begin{equation*}
P \cdot \sigma
= \tworow{ccccc}{p_{\sigma(1)} & \cdots & p_{\sigma(j)} & \cdots & p_{\sigma(n)}}
	{x_{\sigma(1)} & \cdots & x_{\sigma(j)} & \cdots & x_{\sigma(n)}}.
\end{equation*}
Extend this to $P$ where the variables are not in numeric order
by relabeling the variables.
\end{definition}

\begin{remark}
This defines a right group action of
the symmetric group $S_n$, as for any $\sigma, \rho \in S_n$,
\begin{equation*}
(P \cdot \sigma) \cdot \rho = P \cdot (\sigma \rho).
\end{equation*}
\end{remark}

\begin{lemma}
\label{lem:Minkowski_swap}
Suppose that $p_1, \ldots, p_n \in \left(0, \infty\right]$,
\begin{equation*}
P = \tworow{cccccc}{p_1 & \cdots & p_j & p_{j+1} & \cdots & p_n}
	{x_1 & \cdots & x_j & x_{j+1} & \cdots & x_n},
\end{equation*}
$1 \leq j < n$, and $p_j \leq p_{j+1}$. Let $\tau$
denote the transposition which swaps $j$ and $j+1$, fixing all
other values in $\left\{1, \ldots, n\right\}$. Then, for any
$f(x_1, \ldots, x_n) \in L^+(X)$,
\begin{equation*}
\left\| f \right\|_P \leq \left\| f \right\|_{P \cdot \tau}.
\end{equation*}
\end{lemma}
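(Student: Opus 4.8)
The plan is to reduce the $n$-variable statement to the classical ($n=2$) Minkowski integral inequality by freezing every variable except $x_j$ and $x_{j+1}$. Write $g=|f|\in L^+(X)$, and let $A=A(x_j,\ldots,x_n)$ denote the mixed norm of $f$ in the first $j-1$ variables; that is, $A=\left(\int_{X_{j-1}}\cdots\left(\int_{X_1}g^{p_1}\,d\mu_1\right)^{p_2/p_1}\cdots d\mu_{j-1}\right)^{1/p_{j-1}}$ when $j\geq 2$, and $A=g$ when $j=1$. By Tonelli's theorem (here is where $\sigma$-finiteness enters), $A$ is a well-defined element of $L^+$ on the product of $X_j,\ldots,X_n$, and $A$ is literally the \emph{same} function whether we use $P$ or $P\cdot\tau$, since $\tau$ fixes $\{1,\ldots,j-1\}$. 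Unwinding the definition of $\left\|\cdot\right\|_P$ layer by layer, one finds
\begin{equation*}
\left\|f\right\|_P = \left\|C\right\|_Q, \qquad \left\|f\right\|_{P\cdot\tau} = \left\|C'\right\|_Q, \qquad Q = \tworowsmall{p_{j+2}\ \cdots\ p_n}{x_{j+2}\ \cdots\ x_n},
\end{equation*}
where, for each fixed value of $(x_{j+2},\ldots,x_n)$,
\begin{equation*}
C = \left(\int_{X_{j+1}}\left(\int_{X_j}A^{p_j}\,d\mu_j\right)^{p_{j+1}/p_j}d\mu_{j+1}\right)^{1/p_{j+1}}, \qquad C' = \left(\int_{X_j}\left(\int_{X_{j+1}}A^{p_{j+1}}\,d\mu_{j+1}\right)^{p_j/p_{j+1}}d\mu_j\right)^{1/p_j},
\end{equation*}
with the usual modifications if $p_j$ or $p_{j+1}$ equals $\infty$. (When $j=n-1$ the tuple $Q$ is empty and $C,C'$ are constants; when $j=1$ read $A=g$. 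The exponent bookkeeping is routine: the inner $(j-1)$-fold expression equals $A^{p_{j-1}}$, and raising it to $p_j/p_{j-1}$ restores $A^{p_j}$, etc.)

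Since a mixed norm is monotone on $L^+$ — if $0\leq u\leq v$ a.e.\ then $\left\|u\right\|_Q\leq\left\|v\right\|_Q$, which is immediate from the definition — it suffices to prove $C\leq C'$ pointwise a.e.\ in $(x_{j+2},\ldots,x_n)$. Fixing those variables and writing $\varphi(s,t)=A(s,t,x_{j+2},\ldots,x_n)\in L^+(X_j\times X_{j+1})$, this is precisely the two-variable inequality
\begin{equation*}
\left(\int_{X_{j+1}}\left(\int_{X_j}\varphi^{p_j}\,d\mu_j\right)^{p_{j+1}/p_j}d\mu_{j+1}\right)^{1/p_{j+1}} \leq \left(\int_{X_j}\left(\int_{X_{j+1}}\varphi^{p_{j+1}}\,d\mu_{j+1}\right)^{p_j/p_{j+1}}d\mu_j\right)^{1/p_j}.
\end{equation*}

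To establish this last inequality, substitute $\Phi=\varphi^{p_j}$ and $r=p_{j+1}/p_j\in[1,\infty]$; raising both sides to the power $p_j$ converts it into
\begin{equation*}
\left\|\int_{X_j}\Phi(s,\cdot)\,d\mu_j(s)\right\|_{L^r(X_{j+1})} \leq \int_{X_j}\left\|\Phi(s,\cdot)\right\|_{L^r(X_{j+1})}\,d\mu_j(s),
\end{equation*}
which is the classical Minkowski integral inequality for the exponent $r\geq 1$ (the norm of an integral is at most the integral of the norms). The endpoint $r=1$, i.e.\ $p_j=p_{j+1}$, is just Tonelli's theorem and yields equality; the endpoint $r=\infty$, i.e.\ $p_{j+1}=\infty$, is checked directly, since $\varphi(s,t)\leq\operatorname*{ess\,sup}_{t'}\varphi(s,t')$ for a.e.\ $(s,t)$ forces $\int_{X_j}\varphi(s,t)^{p_j}\,d\mu_j\leq\int_{X_j}(\operatorname*{ess\,sup}_{t'}\varphi(s,t'))^{p_j}\,d\mu_j$ for a.e.\ $t$, and one then takes the essential supremum in $t$.

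The only points that require care beyond citing classical Minkowski are the measurability of the partially integrated functions $A$, $C$, $C'$ (Tonelli, using $\sigma$-finiteness) and the exponent bookkeeping when peeling off the outer norm $\left\|\cdot\right\|_Q$; both are mechanical. Thus the expected main step is the classical Minkowski integral inequality itself, applied one transposition at a time, and no genuine obstacle arises.
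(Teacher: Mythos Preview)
Your proposal is correct and follows essentially the same approach as the paper: define the partial mixed norm over the first $j-1$ variables (your $A$ is the paper's $g$), freeze the outer variables $x_{j+2},\ldots,x_n$, apply the classical Minkowski integral inequality with exponent $p_{j+1}/p_j\geq 1$ to obtain the two-variable swap, and then use monotonicity of the outer mixed norm to finish. Your write-up is somewhat more explicit about the endpoint $p_{j+1}=\infty$ and the Tonelli/measurability bookkeeping, but the argument is otherwise the same.
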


\begin{proof}
Define the function
\begin{equation*}
g(x_j, \ldots, x_n)
= \left( \int_{X_{j-1}} \cdots \left( \int_{X_1} f^{p_1} d\mu_1(x_1) \right)^{p_2/p_1} \cdots d\mu_{j-1}(x_{j-1}) \right)^{1/p_{j-1}},
\end{equation*}
which computes a mixed norm over the first $j-1$ variables (if $j=1$, these are zero variables, so this is interpreted as $g=f$),
depending on the remaining variables. Fixing $x_{j+2}, \ldots, x_n$ (i.e. every variable after $x_{j+1}$), the
Minkowski integral inequality, applied with the exponent $\frac{p_{j+1}}{p_j} \geq 1$, shows that 
\begin{align*}
\left\| g \right\|_{\tworowsmall{p_j \, p_{j+1}}{x_j \, x_{j+1}}}
&= \left( \int_{X_{j+1}} \left( \int_{X_j} g^{p_j} d\mu_{p_j} \right)^\frac{p_{j+1}}{p_j} d\mu_{p_{j+1}} \right)^\frac{1}{p_{j+1}} \\
&\leq \left( \int_{X_j} \left( \int_{X_{j+1}} g^{p_{j+1}} d\mu_{p_{j+1}} \right)^\frac{p_j}{p_{j+1}} d\mu_{p_j} \right)^\frac{1}{p_j} \\
&\leq \left\| g \right\|_{\tworowsmall{p_{j+1} \, p_j}{x_{j+1} \, x_j}}.
\end{align*}
This can be interpreted as an inequality of functions of $x_{j+2}, \ldots, x_n$.
Both the integral and essential supremum are order-preserving on nonnegative functions.
Consequently, if $0 \leq f_1 \leq f_2$, then for any $L^p$ or mixed norm $\left\|\cdot\right\|$,
$\left\| f_1 \right\| \leq \left\| f_2 \right\|.$

Therefore we can apply the mixed norm
$\tworowsmall{p_{j+2} \, \cdots \, p_n}{x_{j+2} \, \cdots \, x_n}$
in the remaining variables to both sides
above, yielding
\begin{equation*}
\left\| f \right\|_P
= \left\| \left\| g \right\|_{\tworowsmall{p_j \, p_{j+1}}{x_j \, x_{j+1}}}
	\right\|_{\tworowsmall{p_{j+2} \,\cdots\, p_n}{x_{j+2} \,\cdots\, x_n}}
\leq \left\| \left\| g \right\|_{\tworowsmall{p_{j+1} \, p_j}{x_{j+1} \, x_j}}
	\right\|_{\tworowsmall{p_{j+2} \,\cdots\, p_n}{x_{j+2} \,\cdots\, x_n}}
= \left\| f \right\|_{P \cdot \tau}.
\end{equation*}
\end{proof}

\begin{definition}
\label{defn:perm_raise_lower}
With
\begin{equation*}
P = \tworow{ccc}{p_1 & \ldots & p_n}{x_1 & \ldots & x_n}
\end{equation*}
a permutation $\sigma$ \textit{raises} $P$
if $p_i~\leq~p_j$ whenever $i~<~j$ and $\sigma^{-1}(j)~<~\sigma^{-1}(i)$.
Similarly, a permutation $\sigma$ \textit{lowers} $P$
if $p_j~\leq~p_i$ whenever $i~<~j$ and $\sigma^{-1}(j)~<~\sigma^{-1}(i)$.
\end{definition}

\begin{remark}
\label{rem:trans_raise_lower}
An adjacent transposition $\tau = (\begin{array}{cc}j & j+1\end{array})$ raises
\begin{equation*}
P \cdot \sigma = \tworow{ccc}{p_{\sigma(1)} & \cdots & p_{\sigma(n)}}{x_{\sigma(1)} & \cdots & x_{\sigma(n)}}
\end{equation*}
if and only if $p_{\sigma(j)} \leq p_{\sigma(j+1)}$. Similarly, this $\tau$ lowers
$P \cdot \sigma$ if and only if $p_{\sigma(j+1)} \leq p_{\sigma(j)}$.
\end{remark}

\begin{lemma}
\label{lem:inverse_raise_lower}
A permutation $\sigma$ raises $P$ if and only if $\sigma^{-1}$ lowers $P \cdot \sigma$.
(Equivalently, $\sigma$ lowers $P$ if and only if $\sigma^{-1}$ raises $P \cdot \sigma$.)
\end{lemma}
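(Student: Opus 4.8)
The plan is to prove this by unwinding the two definitions and matching them with a relabelling of indices; there is no analytic content, only bookkeeping about the $S_n$-action. Write $P \cdot \sigma = \tworowsmall{q_1 \,\cdots\, q_n}{y_1 \,\cdots\, y_n}$, so that $q_k = p_{\sigma(k)}$ (the variables $y_k = x_{\sigma(k)}$ play no role in the raising/lowering condition). To say "$\sigma^{-1}$ lowers $P \cdot \sigma$" is to apply Definition \ref{defn:perm_raise_lower} with the double tuple there replaced by $P \cdot \sigma$ and the permutation there equal to $\sigma^{-1}$, whose inverse is $\sigma$; this says $q_j \leq q_i$ whenever $i < j$ and $\sigma(j) < \sigma(i)$. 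Substituting $q_j = p_{\sigma(j)}$ and $q_i = p_{\sigma(i)}$, the condition becomes: $p_{\sigma(j)} \leq p_{\sigma(i)}$ for all $i < j$ with $\sigma(j) < \sigma(i)$.

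The key step is the substitution $k = \sigma(i)$, $\ell = \sigma(j)$. Since $\sigma$ is a bijection, as $(i,j)$ runs over all pairs so does $(k,\ell)$; the hypothesis $i < j$ becomes $\sigma^{-1}(k) < \sigma^{-1}(\ell)$, the hypothesis $\sigma(j) < \sigma(i)$ becomes $\ell < k$, and the conclusion becomes $p_\ell \leq p_k$. Renaming $i := \ell$ and $j := k$ (so that $i < j$), this is precisely "$p_i \leq p_j$ whenever $i < j$ and $\sigma^{-1}(j) < \sigma^{-1}(i)$", i.e. the statement that $\sigma$ raises $P$. Every implication used is an equivalence, so this establishes the first claim.

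For the parenthetical equivalence I would simply invoke the first claim with $\sigma$ replaced by $\sigma^{-1}$ and $P$ replaced by $P \cdot \sigma$: using $(\sigma^{-1})^{-1} = \sigma$ and $(P \cdot \sigma) \cdot \sigma^{-1} = P \cdot (\sigma \sigma^{-1}) = P$ (the group action property in the remark after Definition \ref{def:Minkowski_action}), it reads "$\sigma^{-1}$ raises $P \cdot \sigma$ if and only if $\sigma$ lowers $P$", which is what is wanted. The only thing to watch — and the closest thing to an obstacle — is not confusing the permutation appearing in Definition \ref{defn:perm_raise_lower} when it is applied to $P \cdot \sigma$ (namely $\sigma^{-1}$, whose inverse is $\sigma$) with the permutation $\sigma$ that produced $P \cdot \sigma$; keeping these distinct, and tracking which inequalities are strict, makes the relabelling go through cleanly.
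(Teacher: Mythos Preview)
Your proof is correct and essentially identical to the paper's: both unwind Definition~\ref{defn:perm_raise_lower} applied to $\sigma^{-1}$ and $P\cdot\sigma$, relabel indices via $\sigma$ (the paper uses $a=\sigma^{-1}(j)$, $b=\sigma^{-1}(i)$ where you use $k=\sigma(i)$, $\ell=\sigma(j)$, which amounts to running the same substitution in the opposite direction), and then derive the parenthetical by swapping $\sigma\leftrightarrow\sigma^{-1}$ and $P\leftrightarrow P\cdot\sigma$.
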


\begin{proof}
As defined, $\sigma$ raises $P$ if and only if $p_i \leq p_j$ whenever $i < j$ and
$\sigma^{-1}(j) < \sigma^{-1}(i)$. Let $b = \sigma^{-1}(i)$ and $a = \sigma^{-1}(j)$,
and observe that this is equivalent to saying that $p_{\sigma(b)} \leq p_{\sigma(a)}$
whenever $a < b$ and $\sigma(b) < \sigma(a)$, i.e. that $\sigma^{-1}$ lowers
$P \cdot \sigma$.

To see that the second formulation is equivalent, just swap $\sigma$ and $\sigma^{-1}$,
$P$ and $P \cdot \sigma$, and note that $P \cdot \sigma \cdot \sigma^{-1} = P$.
\end{proof}

\begin{lemma}
\label{lem:composition_raise_lower}
If $\sigma$ raises $P$ and $\rho$ raises $P \cdot \sigma$, then $\sigma\rho$ raises $P$.
Similarly, if $\sigma$ lowers $P$ and $\rho$ lowers $P \cdot \sigma$, then
$\sigma\rho$ lowers $P$.
\end{lemma}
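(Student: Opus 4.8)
The plan is to argue directly from Definition \ref{defn:perm_raise_lower}, translating ``raises'' for each of $\sigma$, $\rho$, and $\sigma\rho$ into its defining implication about pairs of indices, and then to finish with a short two-case split. No analysis enters here; this is a purely combinatorial statement about the permutations and the tuple $(p_1,\dots,p_n)$.

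First I would fix indices $i < j$ and assume $\sigma\rho$ inverts this pair, i.e. $(\sigma\rho)^{-1}(j) < (\sigma\rho)^{-1}(i)$; the goal is then $p_i \le p_j$. Since $(\sigma\rho)^{-1} = \rho^{-1}\sigma^{-1}$, setting $u = \sigma^{-1}(i)$ and $v = \sigma^{-1}(j)$ (distinct, as $i \neq j$) the hypothesis reads $\rho^{-1}(v) < \rho^{-1}(u)$. Now I would split on the order of $u$ and $v$. If $v < u$, then $i < j$ together with $\sigma^{-1}(j) = v < u = \sigma^{-1}(i)$ is exactly the hypothesis of ``$\sigma$ raises $P$'' for the pair $(i,j)$, giving $p_i \le p_j$. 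If instead $u < v$, I would invoke ``$\rho$ raises $P\cdot\sigma$'': here one must remember that the variables of $P\cdot\sigma$ are out of numeric order, so by Definition \ref{def:Minkowski_action} one relabels, after which position $k$ of $P\cdot\sigma$ carries the exponent $p_{\sigma(k)}$; the pair of positions $(u,v)$ with $u < v$ and $\rho^{-1}(v) < \rho^{-1}(u)$ is then precisely the hypothesis of ``$\rho$ raises $P\cdot\sigma$'', which yields $p_{\sigma(u)} \le p_{\sigma(v)}$, i.e. $p_i \le p_j$ since $\sigma(u) = i$ and $\sigma(v) = j$. In both cases $p_i \le p_j$, so $\sigma\rho$ raises $P$.

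For the ``lowers'' half, rather than repeat the argument with reversed inequalities, I would dualize using Lemma \ref{lem:inverse_raise_lower}: from $\sigma$ lowers $P$ and $\rho$ lowers $P\cdot\sigma$ one gets that $\sigma^{-1}$ raises $P\cdot\sigma$ and $\rho^{-1}$ raises $(P\cdot\sigma)\cdot\rho = P\cdot(\sigma\rho)$; applying the raises-half just proven --- with $\rho^{-1}$ raising $P\cdot(\sigma\rho)$ and $\sigma^{-1}$ raising $\bigl(P\cdot(\sigma\rho)\bigr)\cdot\rho^{-1} = P\cdot\sigma$ --- gives that $(\sigma\rho)^{-1} = \rho^{-1}\sigma^{-1}$ raises $P\cdot(\sigma\rho)$, and one more appeal to Lemma \ref{lem:inverse_raise_lower} turns this into ``$\sigma\rho$ lowers $P$''. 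The only real obstacle is bookkeeping: keeping labels and positions distinct when passing to $P\cdot\sigma$, and applying the relabeling convention of Definition \ref{def:Minkowski_action} carefully so that ``$\rho$ raises $P\cdot\sigma$'' comes out as $p_{\sigma(u)} \le p_{\sigma(v)}$ with the inequality pointing the right way. Once the indices are named, each case is a single line.
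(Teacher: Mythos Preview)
Your proposal is correct and follows essentially the same route as the paper: a direct two-case split on whether $\sigma^{-1}(j) < \sigma^{-1}(i)$ or not for the ``raises'' half, then a dualization via Lemma~\ref{lem:inverse_raise_lower} (applying the ``raises'' half to $\rho^{-1}$ and $\sigma^{-1}$ acting on $P\cdot\sigma\rho$) for the ``lowers'' half. The only difference is cosmetic---you name $u = \sigma^{-1}(i)$ and $v = \sigma^{-1}(j)$ and spell out the relabeling convention more explicitly, while the paper writes $(P\cdot\sigma)_{\sigma^{-1}(i)}$ directly.
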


\begin{proof}
Suppose that $\sigma$ raises $P$ and that $\rho$ raises $P \cdot \sigma$.
Consider any $i < j$ such that $(\sigma\rho)^{-1}(j) < (\sigma\rho)^{-1}(i)$.

If $\sigma^{-1}(j) < \sigma^{-1}(i)$, then $p_i \leq p_j$, because $\sigma$ raises $P$
and $i < j$. Otherwise, $\sigma^{-1}(i) < \sigma^{-1}(j)$ and
$\rho^{-1}(\sigma^{-1}(j)) < \rho^{-1}(\sigma^{-1}(i))$.
Because $\rho$ raises $P \cdot \sigma$, this means that
$\left(P \cdot \sigma\right)_{\sigma^{-1}(i)} \leq \left(P \cdot \sigma\right)_{\sigma^{-1}(j)}$,
i.e. $p_i = p_{\sigma(\sigma^{-1}(i))} \leq p_{\sigma(\sigma^{-1}(j))} = p_j$.

Either way, $p_i \leq p_j$, so $\sigma\rho$ raises $P$.

Next, assume that $\sigma$ lowers $P$ and $\rho$ lowers $P \cdot \sigma$. By
Lemma \ref{lem:inverse_raise_lower}, this means that $\rho^{-1}$ raises
$(P \cdot \sigma) \cdot \rho = P \cdot \sigma\rho$, and $\sigma^{-1}$ raises
$P \cdot \sigma$.
By the previous part of this lemma, $\rho^{-1}\sigma^{-1}$ raises $P\cdot\sigma\rho$.
Applying Lemma \ref{lem:inverse_raise_lower} again, this means that
$\sigma\rho$ lowers $P$, as desired.
\end{proof}

\begin{theorem}
\label{thm:adjacent_swaps}
Any permutation raises $P$ if and only if it is a composition $\tau_1 \cdots \tau_m$
(for some $m \geq 0$)
of adjacent transpositions such that, for each $1 \leq k \leq m$, $\tau_k$ raises
$P \cdot \tau_1 \cdots \tau_{k-1}$.

Similarly, any permutation lowers $P$ if and only if it is a composition of adjacent
transpositions $\tau_1 \cdots \tau_m$ such that each $\tau_k$ lowers
$P \cdot \tau_1 \cdots \tau_{k-1}$.
\end{theorem}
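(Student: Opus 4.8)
The plan is to prove the ``raises'' statement in both directions and then deduce the ``lowers'' statement by the inverse‑swapping device used elsewhere in this section: apply the ``raises'' statement to $\sigma^{-1}$ with base double $n$-tuple $P \cdot \sigma$ (legitimate since $\sigma^{-1}$ raises $P \cdot \sigma$ by Lemma~\ref{lem:inverse_raise_lower}), reverse the resulting word, and convert each factor back via Lemma~\ref{lem:inverse_raise_lower}; the base double $n$-tuples can be checked to match after telescoping cancellation. (Equivalently, one may repeat the argument below with every inequality reversed.) For the forward implication, suppose $\sigma = \tau_1 \cdots \tau_m$ with each $\tau_k$ raising $P \cdot \tau_1 \cdots \tau_{k-1}$. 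I would induct on $m$: the empty product ($m = 0$) is the identity, which raises every $P$ because the condition of Definition~\ref{defn:perm_raise_lower} is then vacuous; and for $m \geq 1$, the partial product $\tau_1 \cdots \tau_{m-1}$ raises $P$ by the inductive hypothesis while $\tau_m$ raises $P \cdot (\tau_1 \cdots \tau_{m-1})$ by assumption, so Lemma~\ref{lem:composition_raise_lower} shows that $\sigma = (\tau_1 \cdots \tau_{m-1}) \tau_m$ raises $P$.

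For the reverse implication, suppose $\sigma$ raises $P$; I would induct on the number of inversions of $\sigma$. If $\sigma$ is the identity, take $m = 0$. Otherwise, since $\sigma$ is not the identity, there is an $r \in \{1, \ldots, n-1\}$ with $\sigma^{-1}(r) > \sigma^{-1}(r+1)$. Applying Definition~\ref{defn:perm_raise_lower} to the pair $i = r < j = r+1$, for which $\sigma^{-1}(j) < \sigma^{-1}(i)$, forces $p_r \leq p_{r+1}$; hence by Remark~\ref{rem:trans_raise_lower} the adjacent transposition $\tau$ swapping $r$ and $r+1$ raises $P$. Put $\rho = \tau \sigma$. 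Since $\rho^{-1} = \sigma^{-1} \tau$ agrees with $\sigma^{-1}$ except that the entries in positions $r$ and $r+1$ are interchanged, and those were out of order in $\sigma^{-1}$, the permutation $\rho$ has strictly fewer inversions than $\sigma$.

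The heart of the argument is to check that $\rho$ still raises $P \cdot \tau$. Granting that, the inductive hypothesis applied to $\rho$ with base double $n$-tuple $P \cdot \tau$ yields $\rho = \tau_2 \cdots \tau_m$ with each $\tau_k$ raising $(P \cdot \tau) \cdot \tau_2 \cdots \tau_{k-1} = P \cdot \tau \tau_2 \cdots \tau_{k-1}$, and setting $\tau_1 = \tau$ gives the required factorization $\sigma = \tau_1 \tau_2 \cdots \tau_m$. To prove that $\rho$ raises $P \cdot \tau$, take any $i < j$ with $\rho^{-1}(j) < \rho^{-1}(i)$ and set $i^\ast = \tau(i)$, $j^\ast = \tau(j)$. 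Because $\rho^{-1} = \sigma^{-1} \tau$ and the $a$-th exponent of $P \cdot \tau$ is $p_{\tau(a)}$ (Definition~\ref{def:Minkowski_action}), the hypothesis becomes $\sigma^{-1}(j^\ast) < \sigma^{-1}(i^\ast)$ and the goal becomes $p_{i^\ast} \leq p_{j^\ast}$. If $i^\ast < j^\ast$, this is exactly what ``$\sigma$ raises $P$'' provides. The only pair $i < j$ with $\tau(i) > \tau(j)$ is $(i,j) = (r, r+1)$, and in that case $\sigma^{-1}(j^\ast) < \sigma^{-1}(i^\ast)$ would read $\sigma^{-1}(r) < \sigma^{-1}(r+1)$, contradicting the choice of $r$; so that case does not occur, and the verification is complete.

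The main obstacle is this final verification: the first transposition must be chosen so that it simultaneously raises $P$, strictly decreases the number of inversions, and leaves behind a permutation that still raises the updated double $n$-tuple. What reconciles these requirements is the observation that an adjacent descent of $\sigma^{-1}$ is itself a special case of the raising condition on $\sigma$, which pins down $p_r \leq p_{r+1}$ and thereby controls the single coordinate in which the exponents of $P \cdot \tau$ differ from those of $P$.
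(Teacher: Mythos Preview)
Your proof is correct and uses essentially the same approach as the paper: the forward direction via Lemma~\ref{lem:composition_raise_lower}, the reverse direction by induction on the number of inversions, and the ``lowers'' case by passing to $\sigma^{-1}$ via Lemma~\ref{lem:inverse_raise_lower}. The one difference is cosmetic: the paper peels the adjacent transposition off on the \emph{right} (finding a descent of $\sigma$ and applying the inductive hypothesis to $\sigma\tau$, which still raises the \emph{same} $P$), whereas you peel it off on the \emph{left} (finding a descent of $\sigma^{-1}$ and applying the hypothesis to $\tau\sigma$ with respect to the shifted base $P\cdot\tau$); both work, and your case analysis showing $\rho$ raises $P\cdot\tau$ correctly handles the single pair where $\tau$ reverses order.
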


\begin{proof}
If $\sigma = \tau_m \cdots \tau_1$ is a composition as specified, each
$\tau_k$ raising (or lowering) $P \cdot (\tau_{k-1} \cdots \tau_1)$, then
$\sigma$ raises (or lowers) $P$, by Lemma \ref{lem:composition_raise_lower}.

Now suppose that $\sigma$ raises $P$. The proof that it is a composition of
adjacent transpositions as above is by induction on the number of inversions
in $\sigma$, i.e. the number of pairs $i < j$ such that $\sigma(j) < \sigma(i)$.
As a base case, the identity is an empty composition. It is impossible to have
$\sigma(1) \leq \cdots \leq \sigma(n)$ unless $\sigma$ is the identity, so
any non-identity $\sigma$ must have at least one inverted adjacent pair,
say $\sigma(k+1) < \sigma(k)$.

Let $a = \sigma(k+1)$ and $b = \sigma(k)$ and note that $a < b$ and
$\sigma^{-1}(b) < \sigma^{-1}(a)$, so because $\sigma$ raises $P$,
$p_a \leq p_b$. Let $\tau = \left(\begin{array}{cc} k & k+1\end{array}\right)$
and observe that
\begin{equation*}
P \cdot \sigma\tau =
\tworow{cccccccc}
{p_{\sigma(1)} & \cdots & p_{\sigma(k-1)} & p_a & p_b & p_{\sigma(k+2)} & \cdots & p_{\sigma(n)}}
{x_{\sigma(1)} & \cdots & x_{\sigma(k-1)} & x_a & x_b & x_{\sigma(k+2)} & \cdots & x_{\sigma(n)}}.
\end{equation*}
For any pair $i < j$, $\sigma(i)$ and $\sigma(j)$ are in the same relative order as $\sigma\tau(i)$
and $\sigma\tau(j)$ unless the pair consists of $k$ and $k+1$. Because $\sigma\tau(k) = \sigma(k+1)
< \sigma(k) = \sigma\tau(k+1)$ and $\sigma$ raises $P$, $\sigma\tau$ also raises $P$.
Since $\sigma\tau$ has one fewer inversion than $\sigma$ (as $\sigma\tau(k) = a < b = \sigma\tau(k+1)$),
by the inductive hypothesis, there are adjacent transpositions $\tau_1, \ldots, \tau_m$ such that
$\sigma\tau = \tau_1 \cdots \tau_m$ and each $\tau_k$ raises $P \cdot \tau_1 \cdots \tau_{k-1}$.

Finally, $\tau = \left(\begin{array}{cc} k & k+1\end{array}\right)$ raises $P \cdot \sigma\tau$,
because $a < b$, $\tau^{-1}(b) = k < k+1 = \tau^{-1}(a)$, and $p_a \leq p_b$. Therefore we
let $\tau_{m+1} = \tau$ and have $\sigma = \tau_1 \cdots \tau_{m+1}$ as desired.

Now, if $\sigma$ lowers $P$, then $\sigma^{-1}$ raises $P \cdot \sigma$
by Lemma \ref{lem:inverse_raise_lower}. The preceding characterization shows that
$\sigma^{-1} = \tau_1 \cdots \tau_m$ as a composition of adjacent transpositions, where
each $\tau_k$ raises $P \cdot \sigma \tau_1 \cdots \tau_{k-1} = P \cdot \tau_m \cdots \tau_k$.
Therefore $\sigma = \tau_m \cdots \tau_1$, where by Lemma \ref{lem:inverse_raise_lower}
each $\tau_k = \tau_k^{-1}$ lowers $P \cdot \tau_m \cdots \tau_{k+1}$.
This is the desired result, up to relabeling each $\tau_k$ as $\tau_{m-k}$.
\end{proof}

\begin{theorem}[Mixed-norm Minkowski integral inequality]
\label{thm:Minkowski_mixed}
If $\sigma$ is a permutation which raises $P$, then for any $f \in L^+(X)$,
$\left\| f \right\|_P \leq \left\| f \right\|_{P \cdot \sigma}$.

Similarly, if $\rho$ lowers $P$, then for any $f \in L^+(X)$, $\left\| f \right\|_{P \cdot \rho}
\leq \left\| f \right\|_P$.
\end{theorem}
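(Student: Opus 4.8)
The plan is to reduce the general statement to the single-transposition case already handled in Lemma \ref{lem:Minkowski_swap}, using the combinatorial decomposition supplied by Theorem \ref{thm:adjacent_swaps}. Essentially all of the analytic content lives in Lemma \ref{lem:Minkowski_swap}; the work here is purely one of telescoping and bookkeeping.

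First I would treat the raising case. Given a permutation $\sigma$ that raises $P$, Theorem \ref{thm:adjacent_swaps} produces adjacent transpositions $\tau_1, \ldots, \tau_m$ with $\sigma = \tau_1 \cdots \tau_m$ such that, for each $k$, $\tau_k$ raises $P \cdot \tau_1 \cdots \tau_{k-1}$. Set $P_0 = P$ and $P_k = P_{k-1} \cdot \tau_k = P \cdot \tau_1 \cdots \tau_k$, so that $P_m = P \cdot \sigma$. For each $k$, the transposition $\tau_k$ swaps two adjacent positions, say $j$ and $j+1$; by Remark \ref{rem:trans_raise_lower}, the statement that $\tau_k$ raises $P_{k-1}$ says precisely that the exponent of $P_{k-1}$ in position $j$ is at most the exponent in position $j+1$. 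After relabeling the variables of $P_{k-1}$ to be in numeric order (as permitted by Definition \ref{def:Minkowski_action}), this is exactly the hypothesis $p_j \leq p_{j+1}$ of Lemma \ref{lem:Minkowski_swap}, so that lemma gives $\left\| f \right\|_{P_{k-1}} \leq \left\| f \right\|_{P_{k-1} \cdot \tau_k} = \left\| f \right\|_{P_k}$ for every $f \in L^+(X)$. Chaining these, $\left\| f \right\|_P = \left\| f \right\|_{P_0} \leq \left\| f \right\|_{P_1} \leq \cdots \leq \left\| f \right\|_{P_m} = \left\| f \right\|_{P \cdot \sigma}$, which is the first assertion.

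For the lowering case, suppose $\rho$ lowers $P$. By Lemma \ref{lem:inverse_raise_lower}, $\rho^{-1}$ raises $P \cdot \rho$. Applying the case just proved, with $P$ replaced by $P \cdot \rho$ and $\sigma$ replaced by $\rho^{-1}$, yields $\left\| f \right\|_{P \cdot \rho} \leq \left\| f \right\|_{(P \cdot \rho) \cdot \rho^{-1}} = \left\| f \right\|_P$, since $P \cdot \rho \cdot \rho^{-1} = P$. (Alternatively one could telescope directly off the ``lowers'' halves of Theorem \ref{thm:adjacent_swaps} and Lemma \ref{lem:Minkowski_swap}, but invoking the already-proved case is cleaner.)

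The only point needing any care is the index translation in the middle step: Lemma \ref{lem:Minkowski_swap} is stated for a double tuple whose variables occur in numeric order with swapped exponents satisfying $p_j \leq p_{j+1}$, whereas the intermediate tuples $P_{k-1}$ generally have permuted variables. This gap is closed entirely by the relabeling convention in Definition \ref{def:Minkowski_action} together with the characterization of when an adjacent transposition raises a tuple in Remark \ref{rem:trans_raise_lower}; once those are invoked, no new estimate is required, so I do not anticipate a genuine obstacle beyond keeping the indices straight.
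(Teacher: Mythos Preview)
Your proposal is correct and follows essentially the same route as the paper: decompose $\sigma$ into adjacent transpositions via Theorem \ref{thm:adjacent_swaps}, apply Lemma \ref{lem:Minkowski_swap} at each step using Remark \ref{rem:trans_raise_lower}, and telescope. The only cosmetic difference is that for the lowering case the paper simply says the argument is similar with inequalities reversed (i.e., it telescopes directly off the lowering half of Theorem \ref{thm:adjacent_swaps}), whereas you reduce to the raising case via Lemma \ref{lem:inverse_raise_lower}; both are fine, and you already note the alternative.
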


\begin{proof}
Suppose that $\sigma$ raises $P$. Use Theorem \ref{thm:adjacent_swaps} to write
$\sigma = \tau_1 \cdots \tau_m$, where each $\tau_k$ raises $P \cdot \tau_1 \cdots \tau_{k-1}$.
Between Remark \ref{rem:trans_raise_lower} and Lemma \ref{lem:Minkowski_swap}, for any $f \in L^+(X)$,
\begin{equation*}
\left\| f \right\|_P
\leq \left\| f \right\|_{P \cdot \tau_1}
\leq \cdots
\leq \left\| f \right\|_{P \cdot \tau_1 \cdots \tau_m}
= \left\| f \right\|_{P \cdot \sigma}.
\end{equation*}

The proof when $\rho$ lowers $P$ is similar, with the inequalities reversed.
\end{proof}

\begin{corollary}[Fournier's fully-sorted Minkowski]
\label{cor:Minkowski_sorted}
Let
\begin{equation*}
P = \tworow{ccc}{p_1 & \cdots & p_n}{x_1 & \cdots & x_n},
\end{equation*}
and let $\sigma, \rho \in S_n$ be permutations such that
\begin{equation*}
p_{\sigma(1)} \geq p_{\sigma(2)} \geq \cdots \geq p_{\sigma(n)}
\quad \text{ and } \quad
p_{\rho(1)} \leq p_{\rho(2)} \leq \cdots \leq p_{\rho(n)}.
\end{equation*}
Then, for any $f \in L^+(X)$,
\begin{equation*}
\left\| f \right\|_{P \cdot \rho}
\leq \left\| f \right\|_P
\leq \left\| f \right\|_{P \cdot \sigma}.
\end{equation*}
\end{corollary}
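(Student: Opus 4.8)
The plan is to obtain this corollary as an immediate consequence of Theorem \ref{thm:Minkowski_mixed}; the only real content is checking that its hypotheses are restatements of the raising and lowering conditions of Definition \ref{defn:perm_raise_lower}. So first I would reduce to the two claims: a permutation $\sigma$ with $p_{\sigma(1)} \geq \cdots \geq p_{\sigma(n)}$ raises $P$, and a permutation $\rho$ with $p_{\rho(1)} \leq \cdots \leq p_{\rho(n)}$ lowers $P$.

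To verify the first claim, I would unwind Definition \ref{defn:perm_raise_lower} and show that $p_i \leq p_j$ whenever $i < j$ and $\sigma^{-1}(j) < \sigma^{-1}(i)$. Setting $a = \sigma^{-1}(j)$ and $b = \sigma^{-1}(i)$, so that $\sigma(a) = j$ and $\sigma(b) = i$, the hypothesis $\sigma^{-1}(j) < \sigma^{-1}(i)$ becomes simply $a < b$, and then the monotonicity $p_{\sigma(a)} \geq p_{\sigma(b)}$ reads $p_j \geq p_i$, as required. Hence $\sigma$ raises $P$, and Theorem \ref{thm:Minkowski_mixed} gives $\left\| f \right\|_P \leq \left\| f \right\|_{P \cdot \sigma}$.

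The second claim is handled symmetrically (or, if one prefers, by applying Lemma \ref{lem:inverse_raise_lower} to the first): for $i < j$ with $\rho^{-1}(j) < \rho^{-1}(i)$, writing $a = \rho^{-1}(j) < b = \rho^{-1}(i)$ gives $p_j = p_{\rho(a)} \leq p_{\rho(b)} = p_i$, which is exactly the lowering condition. Theorem \ref{thm:Minkowski_mixed} then yields $\left\| f \right\|_{P \cdot \rho} \leq \left\| f \right\|_P$, and chaining the two inequalities completes the proof.

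I do not expect a genuine obstacle here; the one point demanding care is the bookkeeping distinction between an index and its position in the double tuple, so that Definition \ref{defn:perm_raise_lower} is read with $\sigma^{-1}$ (which records positions) rather than $\sigma$. I would also note in passing that the sorting permutations $\sigma$ and $\rho$ need not be unique when some exponents coincide, but that the $\leq$ and $\geq$ appearing throughout accommodate ties with no modification, so any valid choice works.
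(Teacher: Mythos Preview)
Your proof is correct and follows the same overall route as the paper: reduce to Theorem~\ref{thm:Minkowski_mixed} by showing that the sorting permutations raise and lower $P$. The only difference is in how that hypothesis is checked: the paper argues via bubble sort (any fully sorted $P\cdot\sigma$ is reached by a chain of adjacent out-of-order swaps, hence $\sigma$ raises $P$ by the characterization in Theorem~\ref{thm:adjacent_swaps}), whereas you verify the raising and lowering conditions of Definition~\ref{defn:perm_raise_lower} directly by the substitution $a=\sigma^{-1}(j)$, $b=\sigma^{-1}(i)$. Your verification is arguably tidier, since the bubble-sort step is already absorbed into the proof of Theorem~\ref{thm:adjacent_swaps}, and it also handles any given $\sigma,\rho$ with the stated monotonicity rather than ``some'' such permutation produced by the sort; your remark about ties covers the residual ambiguity.
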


\begin{proof}
Any list can be sorted by adjacent swaps of out-of-order elements; see, for example, the bubble
sort algorithm, as described in \cite[pp.~106--111]{knuth-sorting}.
Such sorting of the exponents into numeric order takes $P$ to $P \cdot \rho$, for some
$\rho \in S_n$ which lowers $P$, as defined in Definition \ref{defn:perm_raise_lower}.
Sorting into reverse numeric order takes $P$ to some $P \cdot \sigma$, where $\sigma$
raises $P$.

By the mixed-norm Minkowski integral inequality in Theorem \ref{thm:Minkowski_mixed},
\begin{equation*}
\left\| f \right\|_{P \cdot \rho}
\leq \left\| f \right\|_P
\leq \left\| f \right\|_{P \cdot \sigma}.
\end{equation*}
\end{proof}

\section{Estimates with symmetric geometric means of mixed norms}
\label{sec:main}

Again, let $(X_1, \mu_1), \ldots, (X_n, \mu_n)$ be $\sigma$-finite measure spaces with product
$(X, \mu)$. Recall the mixed-norm H\"older inequality given by Benedek and Panzone early in
\cite{benedek_panzone}.
(Note that this theorem can be proven by applying the $m$-function H\"older's inequality
in each variable successively.)

\begin{proposition}[Mixed-norm H\"older inequality]
\label{pro:Holder_mixed}
Let $f_1, \ldots, f_m \in L^+(X)$ be any finitely many functions, with corresponding
double $n$-tuples $P_1, \ldots, P_m$, each
\begin{equation}
\label{Holder_Pi}
P_i = \tworow{ccc}{p_{i,1} & \cdots & p_{i,n}}{x_1 & \cdots & x_n}
\end{equation}
such that $\sum_{i=1}^m P_i^{-1} = 1$, understood coordinatewise. That is, for each
$j \in \left\{1, \ldots, n\right\}$,
$\sum_{i=1}^m p_{i,j}^{-1} = 1.$
Then
\begin{equation*}
\int_X f_1 \cdots f_m d\mu
\leq \left\| f_1 \right\|_{P_1} \cdots \left\| f_m \right\|_{P_m}.
\end{equation*}
\end{proposition}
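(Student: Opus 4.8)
The plan is to reduce the $n$-variable statement to the classical finite H\"older inequality applied one variable at a time, working from the innermost integral outward. First I would set up an induction on the number of variables $n$. For the base case $n=1$, the claim is exactly the classical $m$-function H\"older inequality: if $\sum_{i=1}^m p_{i,1}^{-1} = 1$, then $\int_{X_1} f_1 \cdots f_m \, d\mu_1 \leq \prod_{i=1}^m \|f_i\|_{L^{p_{i,1}}(X_1)}$, which follows by iterating the two-function H\"older inequality (or directly from the weighted AM--GM / Young inequality).

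For the inductive step, I would peel off the innermost variable $x_1$. Fix $x_2, \ldots, x_n$ and apply the $n=1$ case in the variable $x_1$ to get
\begin{equation*}
\int_{X_1} f_1 \cdots f_m \, d\mu_1(x_1) \leq \prod_{i=1}^m \left( \int_{X_1} |f_i|^{p_{i,1}} d\mu_1(x_1) \right)^{1/p_{i,1}} = \prod_{i=1}^m g_i(x_2, \ldots, x_n),
\end{equation*}
where $g_i = \left(\int_{X_1} f_i^{p_{i,1}} d\mu_1\right)^{1/p_{i,1}}$ is the partial mixed norm of $f_i$ in the first variable. Now $\int_X f_1 \cdots f_m \, d\mu = \int_{X_2 \times \cdots \times X_n} \left( \int_{X_1} f_1 \cdots f_m \, d\mu_1 \right) d\mu_2 \cdots d\mu_n$, and since the inner integral is bounded pointwise by $\prod_i g_i$ and the remaining integrations are order-preserving on nonnegative functions, it suffices to bound $\int_{X_2 \times \cdots \times X_n} g_1 \cdots g_m \, d\mu_2 \cdots d\mu_n$. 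The functions $g_i$ live on the $(n-1)$-variable product space, and their associated $(n-1)$-tuples are obtained from $P_i$ by deleting the first column; the reciprocal-sum condition $\sum_i p_{i,j}^{-1} = 1$ still holds for $j \in \{2, \ldots, n\}$. By the inductive hypothesis, $\int g_1 \cdots g_m \leq \prod_i \|g_i\|_{\widetilde{P_i}}$, where $\widetilde{P_i}$ is the truncated tuple; but $\|g_i\|_{\widetilde{P_i}} = \|f_i\|_{P_i}$ by the definition of the mixed norm (the outer integrations over $x_2, \ldots, x_n$ reconstitute the full mixed norm from $g_i$). Chaining the inequalities gives the result.

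The main obstacle, such as it is, is bookkeeping: one must handle the cases where some $p_{i,j} = \infty$ (replacing the corresponding integral by an essential supremum, and checking that H\"older still applies with the convention $1/\infty = 0$), and one must be careful that the partial mixed norm $g_i$ is genuinely measurable in the remaining variables so that Tonelli's theorem and the inductive hypothesis apply — here $\sigma$-finiteness of the factor spaces is what is needed. None of this is deep, but it is the part that requires attention; the analytic content is entirely contained in the classical one-variable H\"older inequality, invoked once per variable. An alternative, essentially equivalent, route is to apply the one-variable H\"older inequality successively in each of the $n$ variables without framing it as a formal induction, as the parenthetical remark before the statement suggests; I would present the induction for cleanliness but either works.
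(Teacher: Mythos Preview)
Your proposal is correct and matches the paper's approach exactly: the paper does not give a full proof but only the parenthetical remark that ``this theorem can be proven by applying the $m$-function H\"older's inequality in each variable successively,'' which is precisely the induction you carry out. Your attention to the $p_{i,j}=\infty$ cases and to measurability via $\sigma$-finiteness fills in the routine details the paper leaves implicit.
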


\begin{definition}
\label{defn:harmonic_mean}
Given
\begin{equation*}
P = \tworow{ccc}{p_1 & \cdots & p_n}{x_1 & \cdots & x_n},
\end{equation*}
denote the harmonic mean of the exponents in $P$ by
\begin{equation*}
\overline{p} = \left( \frac{1}{n} \sum_{j=1}^n p_j^{-1} \right)^{-1}.
\end{equation*}
\end{definition}

\begin{definition}
\label{defn:up_down_action}
Define two more right actions of the symmetric group $S_n$ by,
for any $\sigma \in S_n$, letting
\begin{equation*}
P^\sigma = \tworow{ccc}{p_{\sigma(1)} & \cdots & p_{\sigma(n)}}{x_1 & \cdots & x_n}
\text { and }
P_\sigma = \tworow{ccc}{p_1 & \cdots & p_n}{x_{\sigma(1)} & \cdots & x_{\sigma(n)}}.
\end{equation*}

\end{definition}

\begin{definition}
\label{defn:orbit_size_m}
Let $m$ denote the size of the orbit $\left\{ P^\sigma : \sigma \in S_n \right\}$ of $P$.
If the exponents $\left\{p_1, \ldots, p_n\right\}$ have $r$ many distinct values
$v_1, \ldots, v_r$, such that each value $v_k$ occurs $n_k$ many times, then
\begin{equation*}
m = \frac{n!}{n_1! \cdots n_r!}.
\end{equation*}
\end{definition}

\begin{theorem}
\label{thm:Holder_symmetric}
Given a fixed $P$,
let its orbit $\left\{P^\sigma : \sigma \in S_n\right\}$
be enumerated by $P_1, \ldots, P_m$.
For any functions $f_1, \ldots, f_m \in L^+(X)$, 
\begin{equation*}
\left\| \prod_{i=1}^m f_i^{1/m} \right\|_{L^{\overline{p}}(X)}
\leq \prod_{i=1}^m \left\| f_i \right\|_{P_i}^{1/m}.
\end{equation*}
\end{theorem}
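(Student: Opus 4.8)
The plan is to collapse the whole statement into a single application of the mixed-norm H\"older inequality (Proposition \ref{pro:Holder_mixed}) after a scaling of exponents, so that all the genuine content sits in one combinatorial identity about the orbit.

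First I would reduce to an integral inequality. Since permuting the exponents of $P$ does not change their harmonic mean, every $P_i$ in the orbit has the same $\overline{p}$, and
\[
\left\| \prod_{i=1}^m f_i^{1/m} \right\|_{L^{\overline{p}}(X)}^{\overline{p}}
= \int_X \prod_{i=1}^m f_i^{\overline{p}/m}\, d\mu ,
\]
so it suffices to show $\int_X \prod_{i=1}^m f_i^{\overline{p}/m}\, d\mu \le \prod_{i=1}^m \| f_i \|_{P_i}^{\overline{p}/m}$. I would set $g_i = f_i^{\overline{p}/m} \in L^+(X)$ and let $Q_i$ be the double $n$-tuple obtained from $P_i$ by multiplying every exponent by $m/\overline{p}$, keeping the variables fixed. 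The elementary homogeneity of mixed norms under powers, $\| h^{\beta} \|_{Q} = \| h \|_{\beta Q}^{\beta}$ (where $\beta Q$ denotes $Q$ with all exponents multiplied by $\beta$; this holds because raising the integrand to the power $\beta$ and scaling each exponent by $\beta$ leaves every ratio of consecutive exponents unchanged, altering only the outermost root), then gives $\| g_i \|_{Q_i} = \| f_i \|_{(\overline{p}/m) Q_i}^{\overline{p}/m} = \| f_i \|_{P_i}^{\overline{p}/m}$, since $(\overline{p}/m) Q_i = P_i$. So the target inequality is exactly $\int_X \prod_i g_i\, d\mu \le \prod_i \| g_i \|_{Q_i}$, which is the conclusion of Proposition \ref{pro:Holder_mixed} as soon as $\sum_{i=1}^m Q_i^{-1} = 1$ holds coordinatewise.

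The heart of the matter is checking that coordinatewise condition, and this is where I expect the work to lie. Fixing $j \in \{1, \dots, n\}$ and writing $P_i = P^{\sigma_i}$ with $\sigma_1, \dots, \sigma_m$ representing the $m$ cosets of the stabilizer $H$ of $P$ under this action, the $j$-th entry of $Q_i^{-1}$ is $\frac{\overline{p}}{m}\, p_{\sigma_i(j)}^{-1}$, so I need $\sum_{i=1}^m p_{\sigma_i(j)}^{-1} = m/\overline{p}$. Summing over all of $S_n$ first, each index $\ell$ occurs as $\sigma(j)$ for exactly $(n-1)!$ permutations, so $\sum_{\sigma \in S_n} p_{\sigma(j)}^{-1} = (n-1)! \sum_{\ell=1}^n p_\ell^{-1} = (n-1)!\cdot \frac{n}{\overline{p}} = \frac{n!}{\overline{p}}$; since $P^{\tau\sigma} = P^\sigma$ for $\tau \in H$, the quantity $p_{\sigma(j)}^{-1}$ is constant on each coset, and each coset has size $n!/m = n_1!\cdots n_r!$ (Definition \ref{defn:orbit_size_m}), so regrouping the sum over $S_n$ by cosets gives $\frac{n!}{m}\sum_{i=1}^m p_{\sigma_i(j)}^{-1} = \frac{n!}{\overline{p}}$, hence $\sum_{i=1}^m p_{\sigma_i(j)}^{-1} = m/\overline{p}$. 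Then $\sum_i Q_i^{-1} = 1$ in every coordinate, Proposition \ref{pro:Holder_mixed} applies, and taking $\overline{p}$-th roots finishes the proof.

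So the only obstacle I anticipate is this orbit count: being sure that $p_{\sigma(j)}$ is constant on each coset of the stabilizer (which legitimizes the regrouping) and that the stabilizer has order $n_1!\cdots n_r!$. The reduction itself is routine, though I would also note that $\sum_i Q_i^{-1} = 1$ with $m$ summands automatically forces every exponent of every $Q_i$ into $[1,\infty]$, so Proposition \ref{pro:Holder_mixed} is invoked legitimately; and the degenerate case in which all exponents of $P$ agree (so $m=1$) is immediate, since then $P_1 = P$ already has all exponents equal to $\overline{p}$ and $\| f_1 \|_P = \| f_1 \|_{L^{\overline{p}}}$ by Tonelli's theorem, giving equality.
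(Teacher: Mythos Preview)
Your proposal is correct and follows essentially the same route as the paper: rescale each $P_i$ by $m/\overline{p}$ to obtain $Q_i$, verify the coordinatewise H\"older condition $\sum_i Q_i^{-1}=1$, apply Proposition~\ref{pro:Holder_mixed} to the functions $f_i^{\overline{p}/m}$, and then undo the power. The only real difference is in the bookkeeping for $\sum_i Q_i^{-1}=1$: the paper counts, for each coordinate $l$ and each distinct value $v_k$, how many orbit elements place $v_k$ in position $l$ (namely $\tfrac{n_k}{n}m$) and sums; you instead sum $p_{\sigma(j)}^{-1}$ over all of $S_n$ and divide out by the stabilizer size, using that $p_{\sigma(j)}$ is constant on each right coset $H\sigma$. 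Both arguments are short and yield the same identity; your coset formulation is a touch more group-theoretic, while the paper's direct count avoids needing to check constancy on cosets. Your observation that $\sum_i Q_i^{-1}=1$ automatically forces every exponent of each $Q_i$ into $[1,\infty]$ is a clean way to handle what the paper verifies by an explicit inequality, and your treatment of the degenerate $m=1$ case matches the paper's.
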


\begin{proof}
This result is trivial if all exponents are the same, with both sides $L^{\overline{p}}(X)$
norms of a single function. Therefore assume this is not the case, implying in particular
that $\overline{p} < \infty$ and that $m \geq n$.

For each $1 \leq i \leq m$, let
\begin{equation*}
Q_i = \tworow{ccc}{m p_{i,1} / \overline{p} & \cdots & m p_{i,n} / \overline{p}}{x_1 & \cdots & x_n},
\end{equation*}
with $P_i$ as in (\ref{Holder_Pi}). Observe that, for each $i$ and
any $1 \leq j \leq n$, $m p_i(j) \geq 1$, because since $m \geq n$,
\begin{equation*}
\frac{m p_{i,j}}{\overline{p}}
= \frac{m}{n} p_{i,j} \sum_{k=1}^n p_{k,j}^{-1}
\geq \frac{m}{n} \left(1 + \sum_{k \neq j} \frac{p_{i,j}}{p_{k,j}}\right) \geq 1.
\end{equation*}
Furthermore, $\sum_{i=1}^m Q_i^{-1} = 1$ coordinatewise.
To see this, fix any $l \in \left\{1, \ldots, n\right\}$ and $k \in \left\{1, \ldots, r\right\}$.
The number of $P^\sigma$ in the orbit of $P$ which place the value $v_k$
(which appears $n_k$ times in the top row of $P$) in the $l^{th}$ position is then
\begin{equation*}
\frac{(n-1)!}{n_1! \cdots n_{k-1}! (n_k-1)! n_{k+1}! \cdots n_r!}
= \frac{n_k}{n} m.
\end{equation*}
Therefore
\begin{equation*}
\sum_{i=1}^m \frac{\overline{p}}{m p_{i,l}}
= \frac{\overline{p}}{m} \sum_{i=1}^m p_{i,l}^{-1}
= \frac{\overline{p}}{n} \sum_{k=1}^r \frac{n_k}{v_k}
= \frac{\overline{p}}{n} \sum_{j=1}^n p_j^{-1}
= 1,
\end{equation*}
by the definition of $\overline{p}$,
so Proposition \ref{pro:Holder_mixed} (H\"older's inequality) can
be applied to the functions $f_1^{\overline{p}/m}, \ldots,
f_m^{\overline{p}/m}$, yielding
\begin{equation*}
\int_X \prod_{i=1}^m f_i^{\overline{p}/m}
\leq \prod_{i=1}^m \| f_i^{\overline{p}/m} \|_{Q_i}
= \prod_{i=1}^m \| f_i \|_{P_i}^{m/\overline{p}}.
\end{equation*}
Take the $\overline{p}/m$ power of each side for the desired result.
\end{proof}

One mixed norm may be defined by several different double $n$-tuples. For example, if
\begin{equation*}
P_1 = \tworow{ccc}{3 & 2 & 2}{x_1 & x_2 & x_3}
\text{ and }
P_2 = \tworow{ccc}{3 & 2 & 2}{x_1 & x_3 & x_2},
\end{equation*}
then for any measurable $f(x_1, x_2, x_3) \geq 0$,
\begin{equation*}
\left\| f \right\|_{P_1}
= \left( \int_{X_2 \times X_3} \left( \int_{X_1} f^3 d\mu_1 \right)^{2/3} d(\mu_2 \times \mu_3) \right)^{1/2}
= \left\| f \right\|_{P_2}
\end{equation*}
by Tonelli's theorem. In general, the order of the variables associated with consecutive repeated exponents
does not change the norm. (In this example,
the order of $x_2$ and $x_3$ is immaterial.) Therefore, we identify any double $n$-tuples which differ only
in the order of variables within such blocks of repeated exponents. With this identification, as long as
$P$ satisfies $p_1 \geq \cdots \geq p_n$, a simple counting argument shows that the orbit
$\left\{P_\sigma : \sigma \in S_n\right\}$ has the same number of elements
$m$ (from Definition \ref{defn:orbit_size_m})
as the orbit $\left\{P^\sigma : \sigma \in S_n\right\}$.

Furthermore, whenever $p_1 \geq \cdots \geq p_n$, $P$ is maximal in its orbit for Theorem
\ref{thm:Minkowski_mixed} (the mixed-norm Minkowski inequality), in the sense that
for each $\sigma \in S_n$,
$\left\| f \right\|_{P \cdot \sigma} \leq \left\| f \right\|_P$ for any $f \in L^+(X)$.
These two properties lead to the following result.
Although it closely resembles Theorem \ref{thm:Holder_symmetric},
from which it is derived, note that here we consider the double $n$-tuples $P_\sigma$
rather than $P^\sigma$. This means that, while Theorem \ref{thm:Holder_symmetric}
permutes the exponents while leaving the order of the variables fixed, here the
exponents keep their order while the variables are permuted.

\begin{corollary}
\label{cor:HM_symmetric}
Given a fixed $P$ with $p_1 \geq \cdots \geq p_n$,
let its orbit $\left\{P_\sigma : \sigma \in S_n\right\}$,
modulo the above identification, be enumerated by $P_1, \ldots, P_m$.
For any functions $f_1, \ldots, f_m \in L^+(X)$,
\begin{equation*}
\left\| \prod_{i=1}^m f_i^{1/m} \right\|_{L^{\overline{p}}(X)}
\leq \prod_{i=1}^m \left\| f_i \right\|_{P_i}^{1/m}.
\end{equation*}
\end{corollary}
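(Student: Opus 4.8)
The plan is to deduce this from Theorem~\ref{thm:Holder_symmetric} by comparing the two orbits $\{P^\sigma:\sigma\in S_n\}$ and $\{P_\sigma:\sigma\in S_n\}$ through the mixed-norm Minkowski inequality. The crucial observation is that, for each $\sigma\in S_n$, the double $n$-tuples $P^\sigma$ and $P_{\sigma^{-1}}$ describe the \emph{same} assignment of exponents to variables: in $P^\sigma$ the variable $x_k$ carries exponent $p_{\sigma(k)}$, while in $P_{\sigma^{-1}}$ that variable sits in position $\sigma(k)$ and so again carries exponent $p_{\sigma(k)}$. Consequently $P^\sigma$ and $P_{\sigma^{-1}}$ lie in a common orbit of the Minkowski action of Definition~\ref{def:Minkowski_action} (indeed $P_{\sigma^{-1}}=P^\sigma\cdot\sigma^{-1}$). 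Since the exponents of $P_{\sigma^{-1}}$ appear in decreasing order $p_1\ge\cdots\ge p_n$, Corollary~\ref{cor:Minkowski_sorted} (Fournier's fully-sorted Minkowski), together with the Tonelli observation that rearranging variables within a block of repeated exponents leaves a mixed norm unchanged, shows that $\|f\|_{P^\sigma}\le\|f\|_{P_{\sigma^{-1}}}$ for every $f\in L^+(X)$.

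First I would record the bookkeeping that makes the two sides of the two theorems comparable: the exponent multiset of $P$ is the same in both statements, so the harmonic mean $\overline{p}$ of Definition~\ref{defn:harmonic_mean} is the same number, and by the counting argument in the paragraph preceding the statement the orbit $\{P_\sigma\}$, taken modulo the identification there, has the same cardinality $m$ as $\{P^\sigma\}$. Next I would check that $\sigma\mapsto P_{\sigma^{-1}}$ descends to a well-defined bijection from $\{P^\sigma:\sigma\in S_n\}$ onto $\{P_\sigma:\sigma\in S_n\}$ modulo that identification. Concretely, $P^\sigma=P^{\sigma'}$ means $p\circ\sigma=p\circ\sigma'$; because $p$ is sorted, its level sets are exactly the blocks of repeated exponents, and this condition is equivalent to $\sigma^{-1}$ and $\sigma'^{-1}$ having the same image on each such block, i.e.\ to $P_{\sigma^{-1}}$ and $P_{\sigma'^{-1}}$ being identified.

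With these pieces in place the deduction is short. Given any enumeration $P_1,\dots,P_m$ of the second orbit, choose the enumeration $Q_1,\dots,Q_m$ of $\{P^\sigma\}$ with $Q_i$ the preimage of $P_i$ under the bijection above, so that $\|f_i\|_{Q_i}\le\|f_i\|_{P_i}$ for each $i$. Applying Theorem~\ref{thm:Holder_symmetric} to $P$ with this enumeration and the functions $f_1,\dots,f_m$ then gives
\[
\left\|\prod_{i=1}^m f_i^{1/m}\right\|_{L^{\overline{p}}(X)}
\le \prod_{i=1}^m \|f_i\|_{Q_i}^{1/m}
\le \prod_{i=1}^m \|f_i\|_{P_i}^{1/m},
\]
which is the claimed inequality.

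I expect the only real obstacle to be precision rather than mathematics: making the identification of double $n$-tuples explicit enough that the counting argument and the bijection $P^\sigma\leftrightarrow P_{\sigma^{-1}}$ are genuinely well-defined, and verifying carefully that applying Corollary~\ref{cor:Minkowski_sorted} to $P^\sigma$ produces exactly $\|f\|_{P_{\sigma^{-1}}}$ on the right (two fully-sorted-decreasing rearrangements of the same exponent-to-variable assignment differ only by the order of variables inside repeated-exponent blocks, hence have equal mixed norm by Tonelli). Everything else is a direct appeal to results already established.
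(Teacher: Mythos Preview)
Your proposal is correct and is essentially the same argument as the paper's: both set up the correspondence $P_\sigma\leftrightarrow P^{\sigma^{-1}}$ (equivalently $P^\sigma\leftrightarrow P_{\sigma^{-1}}$), apply Theorem~\ref{thm:Holder_symmetric} to the $P^\sigma$-enumeration, and then invoke Corollary~\ref{cor:Minkowski_sorted} to bound each $\|f_i\|_{Q_i}$ by $\|f_i\|_{P_i}$. You are simply more explicit than the paper about why the correspondence is a well-defined bijection modulo the Tonelli identification.
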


\begin{proof}
For each $P_\sigma$ in the orbit $\left\{P_\sigma : \sigma \in S_n\right\}$, there is a
corresponding $P^{\sigma^{-1}} = P_\sigma \cdot \sigma^{-1}$ in the other orbit,
$\left\{P^\sigma : \sigma \in S_n\right\}$.
Let $Q_1, \ldots, Q_m$ be obtained from $P_1, \ldots, P_m$ in this way; that is,
writing each $P_i = P_{\sigma_i}$, the corresponding $Q_i = P^{\sigma_i^{-1}}$.
These $Q_i$ enumerate the collection of $P^{\sigma^{-1}}$, which is in
fact the orbit $\left\{P^\sigma : \sigma \in S_n\right\}$.

By Theorem \ref{thm:Holder_symmetric},
\begin{equation*}
\left\| \prod_{i=1}^m f_i^{1/m} \right\|_{L^{\overline{p}}(X)}
\leq \prod_{i=1}^m \left\| f_i \right\|_{Q_i}^{1/m}.
\end{equation*}
Because each $P_i$ can be obtained from $Q_i$ by sorting its columns so that the exponents
are in decreasing order, by Corollary \ref{cor:Minkowski_sorted}, each
$\left\|f_i\right\|_{Q_i} \leq \left\| f_i\right\|_{P_i}$.
\end{proof}

\begin{corollary}
\label{cor:HM_symmetric_1fn}
Given a fixed $P$ with $p_1 \geq \cdots \geq p_n$,
let its orbit $\left\{ P_\sigma : \sigma \in S_n\right\}$ be enumerated by $P_1, \ldots, P_m$.
For any $f \in L^+(X)$,
\begin{equation*}
\left\| f \right\|_{L^{\overline{p}}(X)} \leq \prod_{i=1}^m \left\| f \right\|_{P_i}^{1/m}.
\end{equation*}
\end{corollary}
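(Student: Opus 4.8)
The plan is to obtain this as the special case of Corollary \ref{cor:HM_symmetric} in which all the functions coincide. Under the standing convention identifying double $n$-tuples that differ only in the order of variables within blocks of equal exponents, the orbit $\left\{P_\sigma : \sigma \in S_n\right\}$ has exactly $m$ elements (by the counting argument preceding Corollary \ref{cor:HM_symmetric}), so the enumeration $P_1, \ldots, P_m$ appearing here is, up to reordering, the same one used there. I would simply apply Corollary \ref{cor:HM_symmetric} with the constant choice $f_1 = f_2 = \cdots = f_m = f$. Then $\prod_{i=1}^m f_i^{1/m} = f^{\sum_{i=1}^m 1/m} = f$, so the left-hand side of that inequality becomes $\left\| f \right\|_{L^{\overline{p}}(X)}$, while the right-hand side becomes $\prod_{i=1}^m \left\| f \right\|_{P_i}^{1/m}$, which is precisely the claimed bound.

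Since the argument is a direct substitution, there is no real obstacle; the only point worth noting is that because all the $f_i$ are taken equal, the particular enumeration of the orbit is immaterial, so no issue arises from the identification of double $n$-tuples. (If one instead prefers to list the full $n!$-element orbit of literal double $n$-tuples, the same substitution followed by collecting the $n_1!\cdots n_r!$ equal factors that correspond to each distinct mixed norm recovers the stated form with exponent $1/m$.)
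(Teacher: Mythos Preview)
Your proposal is correct and matches the paper's proof exactly: the paper also simply applies Corollary~\ref{cor:HM_symmetric} with each $f_i = f$. The extra remarks you include about the enumeration and identification of double $n$-tuples are accurate but not needed for the argument.
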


\begin{proof}
Simply apply Corollary \ref{cor:HM_symmetric} with each $f_i = f$.
\end{proof}

\begin{remark}
The exponent $\overline{p}$ on the left-hand side of the inequality in each of
Theorem \ref{thm:Holder_symmetric}, Corollary \ref{cor:HM_symmetric}, and
Corollary \ref{cor:HM_symmetric_1fn} is the only exponent $p$ such that the result is
valid for all $\sigma$-finite measure spaces, even allowing a constant $C$
(depending on the spaces, but not the functions $f_i$) such that
\begin{equation}
\label{eqn:only_exponent}
\left\| \prod_{i=1}^m f_i^{1/m} \right\|_{L^p(X)}
\leq C \prod_{i=1}^m \left\| f_i \right\|_{P_i}^{1/m}.
\end{equation}
(Consider $X_1 = \cdots = X_n = \mathbb{R}$ and each $f_1 = \cdots = f_m =
\prod_{j=1}^n \chi_{[0,t]}(x_j)$, then take limits $t \to 0$ and $t \to \infty$.
Similar examples are possible in any spaces featuring sets of arbitrarily small and
arbitrarily large measure.)
\end{remark}

As an additional note, when using either Corollary \ref{cor:HM_symmetric} or Corollary \ref{cor:HM_symmetric_1fn},
it suffices to specify only the top row as an $n$-tuple $\left(p_1, \ldots, p_n\right)$ with
$p_1 \geq \cdots \geq p_n$, for this is enough to specify both the orbit $\left\{P_\sigma :
\sigma \in S_n\right\}$ and $\overline{p}$.

\section{Applications of main results}
\label{sec:applications}

These results provide an easy way to generate mixed-norm estimates, where most of the
computational work is finding the harmonic mean $\overline{p}$. Many estimates in
the literature are simple consequences of Theorem \ref{thm:Holder_symmetric} and
Corollary \ref{cor:HM_symmetric}, and can now be easily proven and generalized.

Perhaps the simplest application is a mixed-norm intermediate result to Littlewood's
4/3 inequality, a fundamental step in the theory of multilinearity, and an early example
of the importance of $L^p$ for exponents $p$ other than the ubiquitous $1$, $2$, and
$\infty$. One modern source describing Littlewood's 4/3 inequality is
Garling's book \cite{garling-inequalities}, where the proof of the inequality, there
Corollary 18.1.1, establishes and uses this mixed-norm estimate.

As with many of these sorts of results, the original was given for sums, but these
methods easily generalize it to integrals.

\begin{proposition}
\label{pro:4/3}
For any $\sigma$-finite measure spaces $(X, \mu)$ and $(Y, \nu)$ and any function
$f(x,y) \in L^+(X \times Y)$,
\begin{align*}
\left( \int_{X \times Y} f^{\frac{4}{3}} d\mu d\nu \right)^{\frac{3}{4}}
&= \left( \int_Y \left( \int_X f^2 d\mu \right)^{\frac{1}{2}} d\nu \right)^{\frac{1}{2}}
\left( \int_X \left( \int_Y f^2 d\nu \right)^{\frac{1}{2}} d\mu \right)^{\frac{1}{2}}.
\end{align*}
\end{proposition}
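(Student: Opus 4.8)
The plan is to read off the right-hand side as a symmetric geometric mean of two mixed norms in the variables $x$ and $y$, and then apply Corollary~\ref{cor:HM_symmetric_1fn} directly. Concretely, I would take $n=2$ with $(X_1,\mu_1)=(X,\mu)$ and $(X_2,\mu_2)=(Y,\nu)$, and
\begin{equation*}
P = \tworow{cc}{2 & 1}{x & y},
\end{equation*}
whose exponents are already in decreasing order, $p_1 = 2 \ge 1 = p_2$, as the corollary requires. (Equivalently, by the remark closing Section~\ref{sec:main}, it is enough to name the top row $(2,1)$.)

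First I would compute the harmonic mean from Definition~\ref{defn:harmonic_mean}: $\overline{p} = \bigl(\tfrac12(\tfrac12 + 1)\bigr)^{-1} = \tfrac43$, so the left-hand side is exactly $\|f\|_{L^{\overline{p}}(X\times Y)}$. Next I would enumerate the orbit $\{P_\sigma : \sigma \in S_2\}$, in which the \emph{variables}, not the exponents, are permuted: it has the two elements $P_1 = P$ and $P_2 = \tworow{cc}{2 & 1}{y & x}$, so $m = 2$. Unwinding the definition of $\|\cdot\|_P$, and keeping in mind that the exponent $p_1=2$ attaches to the innermost integral, gives
\begin{equation*}
\|f\|_{P_1} = \int_Y \Bigl(\int_X f^2\,d\mu\Bigr)^{1/2} d\nu,
\qquad
\|f\|_{P_2} = \int_X \Bigl(\int_Y f^2\,d\nu\Bigr)^{1/2} d\mu,
\end{equation*}
which are precisely the two factors on the right-hand side.

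Finally, since the exponents $2$ and $1$ are not all equal, Corollary~\ref{cor:HM_symmetric_1fn} applies and yields
\begin{equation*}
\|f\|_{L^{4/3}(X\times Y)} \le \|f\|_{P_1}^{1/2}\,\|f\|_{P_2}^{1/2},
\end{equation*}
which is the assertion. (As displayed the relation should read $\le$ rather than $=$; equality already fails for $f(x,y) = g(x)h(y)$, where it would force $\|g\|_{4/3} = \|g\|_1^{1/2}\|g\|_2^{1/2}$, which is only an inequality.) There is no genuine obstacle here: the analytic content has been absorbed into Theorem~\ref{thm:Holder_symmetric} and Corollary~\ref{cor:Minkowski_sorted}. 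The only points needing a little care are matching the iterated integrals in the statement against the ordering conventions built into $\|\cdot\|_P$, and recognizing that the factors on the right come from the variable-permuting orbit $\{P_\sigma\}$ — handled by Corollary~\ref{cor:HM_symmetric} — rather than from the exponent-permuting orbit $\{P^\sigma\}$ of Theorem~\ref{thm:Holder_symmetric}.
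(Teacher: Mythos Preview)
Your proposal is correct and matches the paper's own proof essentially verbatim: the paper simply applies Corollary~\ref{cor:HM_symmetric_1fn} with $P = \tworowsmall{2\,1}{x\,y}$ and computes $\overline{p} = 4/3$. Your added remark that the displayed relation should be $\leq$ rather than $=$ is also correct.
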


\begin{proof}
Use Corollary \ref{cor:HM_symmetric_1fn} with
$P = \tworowsmall{2 \, 1}{x \, y}$,
so $\overline{p} = \left( \frac{2^{-1} + 1^{-1}}{2} \right)^{-1} = \frac{4}{3}$.
\end{proof}

Blei gives a similar 6/5 inequality with three variables in Lemma 2 on page 430 of
\cite{blei:int_frac_dimension}, again stated for series but easily generalized to integrals
on any $\sigma$-finite spaces. To produce and prove this result, simply apply Corollary
\ref{cor:HM_symmetric_1fn} with $P = \left(2, 1, 1\right)$, so $\overline{p} = 6/5$.

These results find a generalization in Blei's Lemma 5.3 from \cite{blei:fractional_cartesian_products},
which considers exponents $2$ and $1$, each appearing arbitrarily often.
A special case of this mixed-norm estimate was used as Lemma 1 in \cite{BH-hypercontractive},
a paper using multilinear techniques to study the Bohnenblust-Hille inequality.
Preliminary definitions are followed by a generalization of Blei's result from sums to integrals.

\begin{definition}
Consider integers $J > K > 0$. Let $N = \binom{J}{K}$ and let $S_1, \ldots, S_N$ enumerate the subsets of
$\left\{1, \ldots, J\right\}$ with cardinality $K$. For $1~\leq~\alpha~\leq~N$, let $\sim S_\alpha$ denote the
complement $\left\{1, \ldots, J\right\} \setminus S_\alpha$.
\end{definition}

\begin{proposition}
\label{pro:Blei_21}
For any $\sigma$-finite measure spaces $(X_1, \mu_1), \ldots, (X_J, \mu_j)$ and any measurable function
$f(x_1, \ldots, x_J)$ on $X_1 \times \cdots \times X_J$,
\begin{equation*}
\left( \int_{\left\{1, \ldots, J\right\}} \left|f\right|^\frac{2J}{K+J} \right)^\frac{K+J}{2J}
\leq \prod_{\alpha=1}^N \left[ \int_{S_\alpha} \left( \int_{\sim S_\alpha} |f|^2 \right)^{1/2} \right]^{1/N},
\end{equation*}
where for any subset $E \subset \left\{1, \ldots, J\right\}$, the notation $\int_E$ denotes integration over the product
space $\prod_{k \in E} X_k$.
\end{proposition}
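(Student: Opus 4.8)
The plan is to recognize the claimed inequality as the special case of Corollary~\ref{cor:HM_symmetric_1fn} in which $P$ has $J-K$ exponents equal to $2$ and $K$ exponents equal to $1$. Since a mixed norm depends only on the modulus of its argument, I first replace $f$ by $|f| \in L^+(X_1 \times \cdots \times X_J)$, so it suffices to prove the inequality for $f \geq 0$. Then I take $n = J$ and
\begin{equation*}
P = \tworow{cccccc}{2 & \cdots & 2 & 1 & \cdots & 1}{x_1 & \cdots & x_{J-K} & x_{J-K+1} & \cdots & x_J},
\end{equation*}
with $J-K$ copies of $2$ followed by $K$ copies of $1$; its exponents are in decreasing order, so Corollary~\ref{cor:HM_symmetric_1fn} applies.

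Two computations then remain. First, the harmonic mean of the exponents is
\begin{equation*}
\overline{p} = \left( \frac{1}{J}\left( (J-K)\cdot\tfrac{1}{2} + K \right) \right)^{-1} = \left( \frac{J+K}{2J} \right)^{-1} = \frac{2J}{K+J},
\end{equation*}
which is exactly the exponent on the left-hand side of the statement, so $\|f\|_{L^{\overline{p}}(X)}$ is the left-hand side. Second, I would identify the orbit $\{P_\sigma : \sigma \in S_J\}$ modulo the identification of tuples differing only by reordering variables within a block of equal exponents (set up before Corollary~\ref{cor:HM_symmetric}). Permuting the variables of $P$ produces a tuple in which some $(J-K)$-element set of variables carries the exponent $2$ in the inner positions and the complementary $K$-element set carries the exponent $1$ in the outer positions; modulo the identification, such a tuple is determined by that $K$-element set. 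Hence the orbit has $\binom{J}{K} = N$ elements, which I enumerate as $P_1, \ldots, P_N$, taking $P_\alpha$ to be the tuple with exponent $1$ on the variables indexed by $S_\alpha$ and exponent $2$ (in the inner positions) on those indexed by $\sim S_\alpha$. By Tonelli's theorem the order of integration within each block is immaterial, so
\begin{equation*}
\|f\|_{P_\alpha} = \int_{S_\alpha}\left( \int_{\sim S_\alpha} |f|^2 \right)^{1/2}.
\end{equation*}

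Feeding these identifications into Corollary~\ref{cor:HM_symmetric_1fn} gives
\begin{align*}
\left( \int_{\{1,\ldots,J\}} |f|^{\frac{2J}{K+J}} \right)^{\frac{K+J}{2J}}
&= \|f\|_{L^{\overline{p}}(X)}
\leq \prod_{\alpha=1}^N \|f\|_{P_\alpha}^{1/N} \\
&= \prod_{\alpha=1}^N \left[ \int_{S_\alpha}\left( \int_{\sim S_\alpha} |f|^2 \right)^{1/2} \right]^{1/N},
\end{align*}
which is the asserted inequality. There is no genuine obstacle here: all the content lies in the bookkeeping of the previous paragraph, namely checking that the orbit of $(2,\ldots,2,1,\ldots,1)$ under variable permutations, after the block identification, is in bijection with the $K$-subsets $S_\alpha$ of $\{1,\ldots,J\}$, and that the mixed norm attached to $S_\alpha$ is precisely the iterated integral written in the statement. (As sanity checks, $J=2$, $K=1$ recovers Proposition~\ref{pro:4/3}, and $J=3$, $K=2$ recovers Blei's $6/5$ inequality.)
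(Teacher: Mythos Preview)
Your proof is correct and follows essentially the same approach as the paper: apply Corollary~\ref{cor:HM_symmetric_1fn} with $P=(2,\ldots,2,1,\ldots,1)$ having $J-K$ twos and $K$ ones, identify the orbit with the $K$-subsets $S_\alpha$, compute $\overline{p}=2J/(K+J)$, and read off the inequality. The paper's proof is slightly terser but makes exactly the same identifications.
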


\begin{proof}
To prepare for Corollary \ref{cor:HM_symmetric_1fn}, 
let $P = \left(\begin{array}{cccccc} 2 & \cdots & 2 & 1 & \cdots & 1 \end{array}\right)$, with $K$ copies of $1$ and
$J-K$ copies of $2$. There are exactly $\binom{J}{K}$ norms in the orbit of $P$, because each such norm is determined
by choosing $K$ variables to place with the $1$ exponents. The $K$ indices of these variables form a subset $S_\alpha$
of $\left\{1, \ldots, J\right\}$.
With the remaining variables, in $\sim S_\alpha$, associated
with the exponent $2$, we form a mixed norm $P_\alpha$ such that
\begin{equation*}
\left\| f \right\|_{P_\alpha} = \int_{S_\alpha} \left( \int_{\sim S_\alpha} |f|^2 \right)^{1/2}.
\end{equation*}
With $K$ copies of $1$ and $J-K$ copies of $2$, the harmonic mean is
\begin{equation*}
\overline{p} = \left( \frac{ K + \frac{1}{2}(J-K)}{J} \right)^{-1} = \frac{2J}{K+J},
\end{equation*}
so the desired result follows from Corollary \ref{cor:HM_symmetric_1fn}.
\end{proof}

Blei's method of proof rests on the same H\"older and Minkowski foundations, but takes three pages in an
induction over single-variable H\"older rather than using mixed-norm techniques. Not only do we have a
quicker and easier proof, but it is now straightforward to find generalizations beyond the exponents $1$
and $2$.

\begin{proposition}
\label{pro:Blei_qp}
Suppose
that
$0 < p < q \leq \infty$.
For any $\sigma$-finite measure spaces $(X_1, \mu_1), \ldots, (X_J, \mu_j)$ and any measurable function
$f(x_1, \ldots, x_J)$ on $X_1 \times \cdots \times X_J$,
\begin{equation*}
\left\| f \right\|_{\frac{Jpq}{pJ + (q-p)K}}
\leq \prod_{\alpha=1}^N \left[ \int_{S_\alpha} \left( \int_{\sim S_\alpha} |f|^q \right)^{p/q} \right]^{1/Np},
\end{equation*}
where for any subset $E \subset \left\{1, \ldots, J\right\}$, the notation $\int_E$ denotes integration over the product
space $\prod_{k \in E} X_k$.
\end{proposition}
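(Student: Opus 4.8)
The plan is to follow the proof of Proposition \ref{pro:Blei_21} essentially verbatim, replacing the exponents $2$ and $1$ by $q$ and $p$, and then to invoke Corollary \ref{cor:HM_symmetric_1fn}.

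First I would record the double $J$-tuple to feed into that corollary. Since $q > p$, set
\[
P = \left(\begin{array}{cccccc} q & \cdots & q & p & \cdots & p \end{array}\right),
\]
with $J-K$ copies of $q$ followed by $K$ copies of $p$, so that the exponents are in decreasing order as the corollary requires. (If $q = \infty$ the innermost iterated integral is read as an essential supremum and the harmonic mean is computed with the convention $1/\infty = 0$; nothing else changes.) The top row of $P$ takes exactly two distinct values, with multiplicities $J-K$ and $K$, so by Definition \ref{defn:orbit_size_m} the orbit $\left\{P_\sigma : \sigma \in S_J\right\}$, taken modulo the identification of variables within blocks of equal exponents, has exactly $\frac{J!}{(J-K)!\,K!} = \binom{J}{K} = N$ elements. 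Each such element is pinned down by the choice of which $K$ variables are assigned the exponent $p$: calling the set of their indices $S_\alpha$, the remaining $J-K$ variables, indexed by $\sim S_\alpha$, carry the exponent $q$ innermost, and the corresponding mixed norm $P_\alpha$ satisfies
\[
\left\| f \right\|_{P_\alpha} = \left( \int_{S_\alpha} \left( \int_{\sim S_\alpha} |f|^q \right)^{p/q} \right)^{1/p},
\]
the order of integration within $S_\alpha$ and within $\sim S_\alpha$ being immaterial by Tonelli's theorem. Hence $\left\| f \right\|_{P_\alpha}^{1/N}$ is exactly the $\alpha$-th factor on the right-hand side of the claimed inequality.

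Next I would compute the harmonic mean of the exponents of $P$: with $J-K$ of them equal to $q$ and $K$ equal to $p$,
\[
\overline{p} = \left( \frac{1}{J}\left( \frac{J-K}{q} + \frac{K}{p} \right) \right)^{-1} = \frac{Jpq}{(J-K)p + Kq} = \frac{Jpq}{pJ + (q-p)K},
\]
which is precisely the exponent on the left-hand side. Applying Corollary \ref{cor:HM_symmetric_1fn} to $f$ with this $P$ then gives $\left\| f \right\|_{L^{\overline{p}}(X)} \leq \prod_{\alpha=1}^N \left\| f \right\|_{P_\alpha}^{1/N}$, which unwinds to the stated inequality.

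There is no real obstacle here once the earlier results are available: all the work sits in Theorem \ref{thm:Holder_symmetric} and Corollary \ref{cor:Minkowski_sorted}, on which Corollary \ref{cor:HM_symmetric_1fn} rests. The only points demanding care are bookkeeping ones: writing $P$ with exponents in decreasing order so the corollary applies, verifying that the orbit has $\binom{J}{K}$ elements, matching the exponent $1/p$ so that $\left\| f \right\|_{P_\alpha}^{1/N}$ coincides with the factor as written, and observing that the case $q = \infty$ is already subsumed by the essential-supremum convention in the definition of a mixed norm.
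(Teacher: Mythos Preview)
Your proof is correct and follows essentially the same approach as the paper: set $P = (q,\ldots,q,p,\ldots,p)$ with $J-K$ copies of $q$ and $K$ copies of $p$, compute the harmonic mean $\overline{p} = \frac{Jpq}{pJ+(q-p)K}$, and apply Corollary~\ref{cor:HM_symmetric_1fn} exactly as in Proposition~\ref{pro:Blei_21}. The paper's proof is terser (it simply says ``the argument proceeds as in Proposition~\ref{pro:Blei_21}''), but your expanded version fills in precisely the same details.
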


\begin{proof}
Let $P = \left(\begin{array}{cccccc} q & \cdots & q & p & \cdots & p \end{array}\right)$, with $K$ copies of $p$
and $J-K$ copies of $q$.
The harmonic mean is
\begin{equation*}
\overline{p} = \left( \frac{p^{-1}K + q^{-1}(J-K)}{J}\right)^{-1} = \frac{Jpq}{Jp + K(q-p)},
\end{equation*}
and the argument proceeds as in Proposition \ref{pro:Blei_21}.
\end{proof}

This technique could easily produce similar results using three or more distinct exponents,
but Corollary \ref{cor:HM_symmetric_1fn} already addresses arbitrarily many.

\begin{remark}
Each of Propositions \ref{pro:4/3}, \ref{pro:Blei_21}, and \ref{pro:Blei_qp} can be easily generalized
to use several functions rather than one, simply by applying Corollary \ref{cor:HM_symmetric} rather
than Corollary \ref{cor:HM_symmetric_1fn}.
\end{remark}

\section{Other mixed-norm estimates}

Although Theorem \ref{thm:Holder_symmetric} and Corollary \ref{cor:HM_symmetric} offer rather polished
results, not every situation calls for these estimates. However, the mixed-norm H\"older and Minkowski inequalities
can be used in other ways, perhaps combined with different techniques.
For example, Theorem \ref{thm:Holder_symmetric} or Corollary \ref{cor:HM_symmetric} do not apply
to Theorems 2.1 and 2.2 in \cite{PopaSinnamon2013}, but the inductive proofs given can be replaced with
much simpler mixed-norm methods. The result follows after suitable definitions.

\begin{definition}
For $j=1, 2, \ldots, n$,
let $(M_j, \mu_j)$ be $\sigma$-finite measure spaces
and define the product measure spaces $(M^n, \mu^n)$
and $(M^n_j, \mu^n_j)$ by
\begin{equation*}
M^n = \prod_{k=1}^n M_k,
\qquad \mu^n = \prod_{k=1}^n \mu_k,
\qquad M^n_j = \underset{k \neq j}{\prod_{k=1}^n} M_k,
\qquad \mu^n_j = \underset{k \neq j}{\prod_{k=1}^n} \mu_k.
\end{equation*}
\end{definition}

\begin{proposition}[Theorems 2.1 and 2.2 in \cite{PopaSinnamon2013}]
\label{pro:PopaSinnamon}
If $n \geq 2$ and $q_1, \ldots, q_n$ are positive (possibly infinite) exponents such that
$\sum_{j=1}^n \frac{1}{q_j} \leq 1$, then
for any nonnegative $\mu^n$-measurable functions $f_1, \ldots, f_n$,
\begin{align*}
\int_{M^n} f_1 \cdots f_n d\mu^n
&\leq \prod_{j=1}^n \left( \int_{M_j} \left( \int_{M^n_j} f_j^{q_j} d\mu^n_j \right)^{p_j/q_j} d\mu_j \right)^{1/p_j} \\
\text{and } \quad
\int_{M^n} f_1 \cdots f_n d\mu^n
&\leq \prod_{j=1}^n \left( \int_{M^n_j} \left( \int_{M_j} f_j^{q_j} d\mu_j \right)^{s_j/q_j} d\mu^n_j \right)^{1/s_j},
\end{align*}
where $\frac{1}{p_j} = \frac{1}{q_j} + 1 - \sum_{k=1}^n \frac{1}{q_k}$
and $\frac{1}{s_j} = \frac{1}{q_j} + \frac{1}{n-1} (1 - \sum_{k=1}^n \frac{1}{q_k})$.
\end{proposition}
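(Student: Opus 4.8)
The plan is to recognize each of the two inequalities as an application of mixed-norm Hölder (Proposition \ref{pro:Holder_mixed}) preceded by mixed-norm Minkowski (Theorem \ref{thm:Minkowski_mixed}) to reorder the variables in each factor. For the first inequality, the idea is that the factor associated with $f_j$ should be a mixed norm in which the variable $x_j$ is innermost with exponent $q_j$ and all other variables $x_k$ ($k\neq j$) are outermost with a common exponent $p_j$. Concretely, for each $j$ I would set
\begin{equation*}
P_j = \tworow{ccc}{p_j & \cdots & p_j & q_j}{x_1 & \cdots \widehat{x_j} \cdots & x_n}
\end{equation*}
(with $x_j$ in the last, innermost slot and the remaining $n-1$ variables, in any order, carrying exponent $p_j$). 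The right-hand side of the first claimed inequality is then exactly $\prod_j \|f_j\|_{P_j}$. So it suffices to check that $\sum_{j=1}^n P_j^{-1} = 1$ coordinatewise and then invoke Proposition \ref{pro:Holder_mixed}. In coordinate $x_j$ the sum of reciprocals is $\frac{1}{q_j} + \sum_{k\neq j}\frac{1}{p_k}$; substituting $\frac{1}{p_k} = \frac{1}{q_k} + 1 - \sum_i \frac{1}{q_i}$ and simplifying should collapse to $1$ using $\sum_i \frac{1}{q_i} = S$, since the $x_j$-coordinate sum becomes $\frac1{q_j} + \sum_{k\neq j}\bigl(\frac1{q_k} + 1 - S\bigr) = S + (n-1)(1-S) - \bigl(\frac1{q_j}\bigr)$... — at this point I need to be slightly careful: the coordinate sums are not all identical, so I would just verify the $x_j$-coordinate directly, getting $\frac1{q_j} + (n-1) - (n-1)S + (S - \frac1{q_j}) = 1 + (n-1)(1-S)$, which forces the normalization only when $S=1$. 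The honest fix is that the hypothesis is $S \le 1$, not $S = 1$, so I would first reduce to the equality case: replace each $q_j$ by a smaller $\widetilde{q_j} \le q_j$ with $\sum 1/\widetilde q_j = 1$ (possible since $S\le 1$; e.g. shrink one exponent), note that $\|f_j\|$ with a smaller exponent in a slot is larger by monotonicity of $L^p$ norms in $p$ on... no — that is false in general, so instead I would scale: actually the cleanest route is to observe that decreasing $1/q_j$ increases $p_j$ (decreases $1/p_j$) only if... Here I anticipate the genuine bookkeeping obstacle, and the right device is simply to apply Proposition \ref{pro:Holder_mixed} with the \emph{defined} $p_j$, which do satisfy $\sum_j P_j^{-1} \le 1$ coordinatewise when $S \le 1$; Hölder with reciprocal exponents summing to at most $1$ still holds on finite measure pieces, and by the usual exhaustion/$\sigma$-finiteness argument one reduces the general statement to that case, or one simply enlarges one exponent in $P_j$ to restore equality and uses that this only enlarges $\|f_j\|_{P_j}$ when the enlargement is in an outer slot. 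I would present whichever of these is least painful; the key point is that the reciprocal-sum computation is the heart of the matter and it works.

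With the Hölder step in place, the first inequality of Proposition \ref{pro:PopaSinnamon} follows, \emph{provided} the mixed norm $\|f_j\|_{P_j}$ as I have written it — with $x_j$ innermost — matches the integral in the statement, which has $x_j$ \emph{outermost} (the $d\mu_j$ integral is on the outside, raised to $1/p_j$, with the $M^n_j$ integral inside raised to $p_j/q_j$). So a reordering is needed: the statement's factor is $\|f_j\|_{Q_j}$ where
\begin{equation*}
Q_j = \tworow{ccc}{q_j & \cdots & q_j & p_j}{x_1 \cdots \widehat{x_j}\cdots x_n}{} \quad\text{— no: } Q_j = \tworow{cccc}{q_j & q_j & \cdots & p_j}{\ }{}
\end{equation*}
— more precisely $Q_j$ has the $n-1$ variables $x_k$ ($k\neq j$) innermost, each with exponent $q_j$, and $x_j$ outermost with exponent $p_j$. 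Since $\frac{1}{p_j} = \frac{1}{q_j} + 1 - S \le \frac{1}{q_j}$, we have $p_j \ge q_j$, so moving $x_j$ from innermost (exponent $q_j$, small) to outermost (exponent $p_j$, large) is a sequence of adjacent transpositions each moving a smaller exponent inward past a larger one — i.e. each raises the tuple in the sense of Definition \ref{defn:perm_raise_lower}. Hence by Theorem \ref{thm:Minkowski_mixed}, $\|f_j\|_{P_j} \le \|f_j\|_{Q_j}$, and chaining this with the Hölder bound gives the claimed inequality. For the second inequality the structure is the mirror image: Hölder is applied with the factor for $f_j$ having $x_j$ \emph{outermost} with exponent $q_j$ and the remaining variables innermost with common exponent $s_j$; here $\frac{1}{s_j} = \frac{1}{q_j} + \frac{1}{n-1}(1-S) \ge \frac{1}{q_j}$ (using $S\le 1$), so $s_j \le q_j$, and the target norm in the statement has $x_j$ innermost — so again a Minkowski reordering, this time in the lowering direction, brings the Hölder factor down to the stated one. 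The reciprocal-sum check for the second part: in the $x_j$-coordinate the exponent is $q_j$ contributing $1/q_j$, and in each other coordinate $x_k$ ($k\neq j$, but also varying over the $n-1$ inner slots of the other factors) one gets contributions $1/s_k$; verifying $\sum_j \text{(}x_j\text{-coordinate reciprocals of }P_j\text{)} = 1$ reduces, by the same bookkeeping, to the definition of $s_j$, with the factor $\frac{1}{n-1}$ exactly accounting for the fact that $x_j$ now appears in an inner slot of each of the $n-1$ other factors.

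The main obstacle, as flagged above, is the $S \le 1$ versus $S = 1$ discrepancy in normalizing the reciprocals for the Hölder step: strict inequality in $\sum 1/q_j \le 1$ means $\sum_j P_j^{-1}$ is (coordinatewise) $< 1$ rather than $= 1$, so one must either (i) invoke a version of mixed-norm Hölder with reciprocals summing to at most $1$ (valid when the ambient measure is finite, then pass to $\sigma$-finite by exhaustion and monotone convergence), or (ii) enlarge one exponent in each $P_j$ to restore exact equality, checking that this enlargement occurs in an outer slot so that the mixed norm only increases — and this is consistent with the Minkowski-reordering step, since one can simply choose to enlarge the $x_j$-slot's exponent in $Q_j$ itself. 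Option (ii) folds the fix cleanly into the Minkowski step and is the route I would write up. Everything else — the choice of $P_j$, the direction (raising vs. lowering) of the Minkowski reorderings, and the arithmetic reduction of the reciprocal sums to the definitions of $p_j$ and $s_j$ — is routine.
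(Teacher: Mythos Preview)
Your overall strategy—mixed-norm H\"older followed by a Minkowski reordering—matches the paper's, but you have the exponent assignment exactly backward, and this is the source of the ``obstacle'' you flagged. In the first inequality the target factor
\[
\left( \int_{M_j} \left( \int_{M^n_j} f_j^{q_j} \, d\mu^n_j \right)^{p_j/q_j} d\mu_j \right)^{1/p_j}
\]
has exponent $q_j$ on every variable \emph{except} $x_j$, and exponent $p_j$ on $x_j$ (outermost). So for the H\"older step one should take $P_j$ with $p_{j,j}=p_j$ and $p_{j,k}=q_j$ for $k\neq j$—the reverse of your choice. With this assignment the column-$k$ reciprocal sum is
\[
\frac{1}{p_k}+\sum_{j\neq k}\frac{1}{q_j}
=\Bigl(\frac{1}{q_k}+1-S\Bigr)+\Bigl(S-\frac{1}{q_k}\Bigr)=1
\]
exactly, for every $S\le 1$, so Proposition~\ref{pro:Holder_mixed} applies directly and there is no $S<1$ obstacle to repair. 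Your proposed fixes (i) and (ii) are addressing a non-issue, and (i) would in any case fail since your reciprocal sum is $S+(n-1)(1-S)\ge 1$, not $\le 1$.

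You also have the direction of $p_j$ versus $q_j$ reversed: since $S\le 1$ gives $1-S\ge 0$, we have $\tfrac{1}{p_j}=\tfrac{1}{q_j}+(1-S)\ge \tfrac{1}{q_j}$, hence $p_j\le q_j$, not $p_j\ge q_j$. This is what makes the Minkowski step work: after H\"older with the variables in the standard order, sorting (Corollary~\ref{cor:Minkowski_sorted}) moves the smallest exponent $p_j$ and its variable $x_j$ to the outermost position, increasing the norm to the stated bound. Finally, note that your $P_j$ and $Q_j$ are not permutations of one another (they have different multisets of exponents), so no Minkowski reordering can relate them; the paper applies H\"older with what you called $Q_j$ from the outset, in the fixed variable order, and then sorts. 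The second inequality is handled the same way with $s_{j,j}=q_j$, $s_{j,k}=s_j$ for $k\neq j$, and $s_j\le q_j$.
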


\begin{proof}
To prove the first inequality, define, for each $1 \leq j \leq n$,
\begin{equation*}
P_j = \tworow{ccc}{p_{j,1} & \cdots & p_{j,n}}{x_1 & \cdots & x_n},
\end{equation*}
where each $p_{j,j} = p_j$ and, for $j \neq k$, $p_{j,k} = q_j$. The hypotheses ensure that every
$p_{j,k} \geq 1$ and that $\sum_{j=1}^n P_j^{-1} = 1$ coordinatewise, i.e. for each $1 \leq k \leq n$,
$\sum_{j=1}^n \frac{1}{p_{j,k}} = 1$. Therefore H\"older's inequality (Proposition \ref{pro:Holder_mixed})
gives
\begin{equation*}
\int_{M^n} f_1 \cdots f_n d\mu^n
\leq \prod_{j=1}^n \left\| f \right\|_{P_j}.
\end{equation*}
Because each $p_j \leq q_j$, Minkowski's inequality (Corollary \ref{cor:Minkowski_sorted}) gives the
first inequality, where each $L^{p_j}_{\mu_j}$ norm over $X_j$ comes last.

For the second inequality, let
\begin{equation*}
S_j = \tworow{ccc}{s_{j,1} & \cdots & s_{j,n}}{x_1 & \cdots & x_n},
\end{equation*}
where each $s_{j,j} = q_j$ and, for $j \neq k$, $s_{j,k} = s_j$. Again, each $s_{j,k} \geq 1$ and
$\sum_{j=1}^n S_j^{-1}$ coordinatewise. By H\"older,
\begin{equation*}
\int_{M^n} f_1 \cdots f_n d\mu^n
\leq \prod_{j=1}^n \left\| f \right\|_{S_j}.
\end{equation*}
Because each $s_j \leq q_j$, the second inequality follows by Minkowski.
\end{proof}

Not only can many known inequalities be proven easily using mixed-norm techniques, but
generalizations are often in reach, as well. For example, the following inequality combines
the coefficients $q_i$ which do not quite satisfy H\"older (with the gap filled by $p_i$) of
Proposition \ref{pro:PopaSinnamon} (drawn from Popa and Sinnamon \cite{PopaSinnamon2013})
with the variable-sized subsets present in Proposition \ref{pro:Blei_21} based on Blei
\cite{blei:fractional_cartesian_products}.

We resume our initial notation, where $(X_1, \mu_1) \ldots, (X_n, \mu_n)$ are
any $\sigma$-finite measure spaces with product $(X,\mu)$.

\begin{theorem}
\label{thm:new_blei_ps}
Let $0 < k < n$ and $M = \binom{n}{k}$,
and let $S_1, \ldots, S_M$ enumerate the size-$k$ subsets of $\left\{1, \ldots, n\right\}$.
Consider any positive (possibly infinite) exponents $q_1, \ldots, q_M$ such that
$\sum_{i=1}^M \frac{1}{q_i} \leq 1$, and define
$\epsilon = 1 - \sum_{i=1}^M \frac{1}{q_i} \geq 0$.
For any nonnegative numbers $c_1, \ldots, c_M$ such that, for each $j \in \left\{1, \ldots, n\right\}$,
$\sum_{S_i \ni j} c_i = 1$,
and any nonnegative $\mu$-measurable functions $f_1, \ldots, f_M$,
\begin{equation*}
\int_{X} f_1 \cdots f_M d\mu
\leq \prod_{i=1}^M \left( \int_{S_i} \left( \int_{\sim S_i} f_i^{q_i} \right)^{p_i/q_i} \right)^{1/p_i},
\end{equation*}
where
$\frac{1}{p_i} = \frac{1}{q_i} + c_i \epsilon$
and $\int_E$, for $E \subset \left\{1, \ldots, n\right\}$, denotes integration over those $X_j$ with $j \in E$.
\end{theorem}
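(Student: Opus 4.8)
The plan is to realize the right-hand side as a product of mixed norms and apply the mixed-norm Hölder inequality (Proposition \ref{pro:Holder_mixed}), followed by the mixed-norm Minkowski inequality (Corollary \ref{cor:Minkowski_sorted}), exactly as in the proof of Proposition \ref{pro:PopaSinnamon} but with the index set $\{1,\ldots,n\}$ replaced by the family of size-$k$ subsets. First I would, for each $i \in \{1,\ldots,M\}$, build a double $n$-tuple
\begin{equation*}
P_i = \tworow{ccc}{p_{i,1} & \cdots & p_{i,n}}{x_1 & \cdots & x_n},
\end{equation*}
assigning to the variables indexed by $\sim S_i$ the exponent $q_i$ and to the variables indexed by $S_i$ the exponent $p_i$, with the $\sim S_i$ block innermost so that $\|f_i\|_{P_i}$ literally equals $\left(\int_{S_i}\left(\int_{\sim S_i} f_i^{q_i}\right)^{p_i/q_i}\right)^{1/p_i}$ after reordering the variables within each block (which is legitimate by the Tonelli-type identification discussed before Corollary \ref{cor:HM_symmetric}).

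The crux is verifying the coordinatewise Hölder condition $\sum_{i=1}^M P_i^{-1} = 1$. Fix a coordinate $j \in \{1,\ldots,n\}$. Splitting the sum over $i$ according to whether $j \in S_i$ or $j \notin S_i$, the contribution is
\begin{equation*}
\sum_{i=1}^M \frac{1}{p_{i,j}}
= \sum_{S_i \ni j} \frac{1}{p_i} + \sum_{S_i \not\ni j} \frac{1}{q_i}.
\end{equation*}
Using $\frac{1}{p_i} = \frac{1}{q_i} + c_i \epsilon$ on the first sum, this becomes $\sum_{i=1}^M \frac{1}{q_i} + \epsilon \sum_{S_i \ni j} c_i = (1-\epsilon) + \epsilon \cdot 1 = 1$, where the last equality uses the hypothesis $\sum_{S_i \ni j} c_i = 1$. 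So the condition holds. I would also record that each $p_{i,j} \geq 1$: for the $q_i$ slots this is immediate since $\sum 1/q_i \le 1$ forces each $q_i \ge 1$, and for the $p_i$ slots, $\frac{1}{p_i} = \frac{1}{q_i} + c_i\epsilon \le \frac{1}{q_i} + \epsilon \le 1$ since $c_i \le 1$ (as $c_i$ is one term in a sum of nonnegative reals equal to $1$) and $1/q_i + \epsilon \le 1$.

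With these facts, Proposition \ref{pro:Holder_mixed} gives
\begin{equation*}
\int_X f_1 \cdots f_M \, d\mu \leq \prod_{i=1}^M \|f_i\|_{P_i}.
\end{equation*}
It remains to identify $\|f_i\|_{P_i}$ with the claimed expression. Since $p_i \le q_i$ (because $1/p_i = 1/q_i + c_i\epsilon \ge 1/q_i$), the mixed norm $P_i$ as I have arranged it — with the $p_i$-exponent variables outermost — is already the "fully sorted, exponents decreasing outward" form, so $\|f_i\|_{P_i}$ is exactly $\left(\int_{S_i}\left(\int_{\sim S_i} f_i^{q_i}\right)^{p_i/q_i}\right)^{1/p_i}$, with the internal variable orders irrelevant by the identification. (If one instead starts from some other ordering of $P_i$, Corollary \ref{cor:Minkowski_sorted} bounds that norm above by this sorted one, which still suffices.) The main obstacle is purely bookkeeping: keeping the two families of indices $\{1,\ldots,n\}$ and $\{1,\ldots,M\}$ straight and confirming the counting identity $\sum_{S_i \ni j} c_i = 1$ plugs into the coordinate sum correctly; there is no analytic difficulty beyond the two cited inequalities. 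I would close by noting that this reduces to Proposition \ref{pro:Blei_21} when $q_i = 2$, $c_i = 1/M \cdot (\text{something})$ — more precisely when $k$ is chosen so that each $j$ lies in $\binom{n-1}{k-1}$ of the $S_i$ and $c_i \equiv \binom{n-1}{k-1}^{-1}$ — and to Proposition \ref{pro:PopaSinnamon} when $k = n-1$ and $c_i \equiv 1$.
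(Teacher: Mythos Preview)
Your proposal is correct and follows essentially the same route as the paper: define $P_i$ with exponent $p_i$ on the $S_i$ coordinates and $q_i$ on the $\sim S_i$ coordinates, verify the coordinatewise H\"older condition $\sum_i 1/p_{i,j}=1$ using $\sum_{S_i\ni j}c_i=1$, check each $p_i\ge 1$ via $c_i\le 1$, then apply mixed-norm H\"older followed by a sorting Minkowski (since $p_i\le q_i$). One small clarification: to invoke Proposition~\ref{pro:Holder_mixed} all the $P_i$ must share the common variable order $x_1,\ldots,x_n$, so you cannot literally place the $\sim S_i$ block innermost at that stage; you apply H\"older with the standard ordering and only afterwards sort each factor via Corollary~\ref{cor:Minkowski_sorted}, exactly as your parenthetical remark and the paper both indicate.
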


\begin{remark}
One possible choice of $c_1, \ldots, c_M$ is $c_1 = \cdots = c_M = 1/\binom{n-1}{k-1}$.
When $q_1 = \cdots = q_M$ as well, this leads to Proposition \ref{pro:Blei_qp}.
(In the typical case $N < M$, there are many other choices, as then the system
$\sum_{S_i \ni j} c_i = 1$ is underdetermined.) One can instead let $k$ be either $1$
or $n-1$ to obtain Proposition \ref{pro:PopaSinnamon}.
\end{remark}

\begin{proof}
For each $1 \leq i \leq M$, define
\begin{equation*}
P_i = \tworow{ccc}{p_{i,1} & \cdots & p_{i,n}}{x_1 & \cdots & x_n}
\end{equation*}
where each $p_{i,j} = p_i$ if $j \in S_i$, and $p_{i,j} = q_i$ otherwise.
Clearly, each $q_i \geq 1$. Because each $0 \leq c_i \leq 1$, $q_i^{-1} \leq 1$, and
\begin{equation*}
\frac{1}{p_i}
= \frac{1}{q_i} + c_i\bigg(1 - \sum_{i=1}^M \frac{1}{q_i}\bigg)
\leq \frac{1}{q_i} + c_i \bigg(1 - \frac{1}{q_i}\bigg)
= c_i \cdot 1 + (1-c_i) \frac{1}{q_i},
\end{equation*}
furthermore $p_i^{-1} \leq 1$,
so each $p_i \geq 1$.
To apply H\"older's inequality, it remains only to prove that $\sum_{i=1}^M P_i^{-1} = 1$
coordinatewise.

For any $j \in \left\{1, \ldots, n\right\}$,
\begin{equation*}
\sum_{i=1}^M \frac{1}{p_{i,j}}
= \sum_{i=1}^M \frac{1}{q_i} + \sum_{S_i \ni j} c_i \epsilon
= 1 - \epsilon + \epsilon \sum_{S_i \ni j} c_i
= 1.
\end{equation*}

Finally, apply H\"older's inequality with mixed norms $P_1, \ldots, P_M$
to functions $f_1, \ldots, f_M$ respectively, followed by a sorting Minkowski.
(Note that each $p_i \leq q_i$, so the $q_i$ norm over the variables
outside of $S_i$ comes first.)
\end{proof}

What follows is perhaps the simplest case of Theorem \ref{thm:new_blei_ps} which gives a new concrete
inequality, not a result of either Proposition \ref{pro:Blei_qp} or Proposition \ref{pro:PopaSinnamon}.
As always, this generalizes to various $\sigma$-finite measure spaces or several distinct functions,
but this result is given in a simple form.

\begin{proposition}
Let $x = x_{i,j,k,l}$ be any quadruply-indexed collection of nonnegative real numbers, where each index
takes at most countably many values. Define
\begin{align*}
A &=
	\Big( \sum_{k,l} \Big( \sum_{i,j} x^{12} \Big)^{1/4}\Big)^{1/3}
	\Big(\sum_{i,j} \Big(\sum_{k,l} x^{12} \Big)^{1/4} \Big)^{1/3},
\\ B &=
	\Big( \sum_{j,l} \Big( \sum_{i,k} x^{12} \Big)^{1/3} \Big)^{1/4}
	\Big( \sum_{i,k} \Big(\sum_{j,l} x^{12} \Big)^{1/3} \Big)^{1/4},
\\ C &=
	\Big( \sum_{j,k} \Big( \sum_{i,l} x^{12} \Big)^{1/2} \Big)^{1/6}
	\Big( \sum_{i,l} \Big(\sum_{j,k} x^{12} \Big)^{1/2} \Big)^{1/6}.
\end{align*}
Then
\begin{equation*}
\sum_{i,j,k,l} x^6 \leq ABC.
\end{equation*}
\end{proposition}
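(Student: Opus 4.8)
The plan is to recognize this as an instance of Theorem~\ref{thm:new_blei_ps} with $n=4$, $k=2$, and hence $M=\binom{4}{2}=6$, applied with all six functions equal to a single $x=x_{i,j,k,l}$ and all underlying measure spaces taken to be countable sets with counting measure (so that mixed norms become nested sums). First I would list the six size-$2$ subsets of $\{1,2,3,4\}$, pairing each $S_i$ with its complement $\sim S_i$: the three ``opposite'' pairs $\{\{1,2\},\{3,4\}\}$, $\{\{1,3\},\{2,4\}\}$, $\{\{1,4\},\{2,3\}\}$ each contribute two terms (inner sum over one pair, outer over the other, in both orders), producing the six factors visible in $A$, $B$, and $C$. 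So $ABC$ is exactly $\prod_{i=1}^{6}\|x\|_{P_i}$ for the six double $4$-tuples $P_i$ arising in the theorem.

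Next I would choose the exponents. Reading off the displayed expressions: the $A$-factors have outer exponent related to $q=12$ on the inner pair and then a $1/4$ power and $1/3$ power outside, i.e. $\|x\|_{P}$ with inner exponent $q_i=12$ over $\sim S_i$ and outer exponent $p_i$ over $S_i$ where $p_i$ satisfies $p_i/q_i$ giving the $1/4$, hence $p_i=3$; similarly the $B$-factors give inner $q_i=12$, outer $p_i=4$ (since the middle power is $1/3=p_i/q_i$... wait, $4/12=1/3$, consistent, and the outermost $1/4=1/p_i$); and the $C$-factors give inner $q_i=12$, outer $p_i=6$ ($6/12=1/2$, outermost $1/6=1/p_i$). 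Thus all six $q_i=12$, while the six values of $p_i$ are $3,3,4,4,6,6$, with the value assigned to $S_i$ according to which coordinate-pair plays the role of the ``outer'' (size-$k$) block. I would then verify the hypotheses of Theorem~\ref{thm:new_blei_ps}: compute $\sum_{i=1}^{6}1/q_i = 6/12 = 1/2$, so $\epsilon = 1/2$; then solve $1/p_i = 1/q_i + c_i\epsilon$ for the required weights $c_i$, giving $c_i = 2(1/p_i - 1/12)$, namely $c_i = 2(1/3-1/12)=1/2$ for the two $A$-type subsets, $c_i=2(1/4-1/12)=1/3$ for the two $B$-type, and $c_i=2(1/6-1/12)=1/6$ for the two $C$-type. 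Finally I would check the balance condition $\sum_{S_i\ni j}c_i=1$ for each $j$: fixing any coordinate $j$, exactly three of the six subsets contain $j$ — one from each opposite-pair family — so $\sum_{S_i\ni j}c_i$ picks up one term at each of the three values, but one must be careful about which of the two subsets in each family is the one containing $j$, and here the $c$-value is the same for both members of a family, so the sum is $\tfrac12+\tfrac13+\tfrac16=1$ as needed.

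With all hypotheses confirmed, Theorem~\ref{thm:new_blei_ps} yields $\int_X f_1\cdots f_M\,d\mu \le \prod_{i=1}^{M}\|f_i\|_{P_i}$; specializing every $f_i$ to $x$ and every space to counting measure on a countable set turns the left side into $\sum_{i,j,k,l} x^{6}$ — indeed the harmonic mean here is $\overline{p} = (\tfrac1M\sum 1/p_i)^{-1}$, but more directly the theorem's left side is the plain integral of the product, which with six equal factors is $\sum x^{6}$ — and the right side into $ABC$. (Alternatively, one can bypass the theorem statement and re-derive the estimate directly: apply the mixed-norm H\"older inequality, Proposition~\ref{pro:Holder_mixed}, to the six $P_i$, whose reciprocals sum to $1$ coordinatewise by the computation above, then apply a sorting Minkowski, Corollary~\ref{cor:Minkowski_sorted}, in each factor to move the smaller exponent $p_i$ to the outer position; this is exactly the proof of Theorem~\ref{thm:new_blei_ps}.) The main obstacle is purely bookkeeping: correctly matching each of the six written factors in $A$, $B$, $C$ to the right subset $S_i$ and confirming that the induced weights $c_i$ genuinely satisfy the balance condition at every coordinate — once the $3,3,4,4,6,6$ pattern of outer exponents is pinned down, everything else is a direct substitution into Theorem~\ref{thm:new_blei_ps}.
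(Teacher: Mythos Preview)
Your proposal is correct and follows essentially the same route as the paper: apply Theorem~\ref{thm:new_blei_ps} with $n=4$, $k=2$, $M=6$, all $q_i=12$ so $\epsilon=1/2$, and weights $c_i\in\{1/2,1/3,1/6\}$ assigned by complementary pairs of size-$2$ subsets, yielding $p_i\in\{3,4,6\}$ and each balance sum $\tfrac12+\tfrac13+\tfrac16=1$. The only cosmetic difference is that the paper enumerates $S_1,\ldots,S_6$ explicitly and posits the $c_i$ directly, whereas you group the subsets by opposite-pair families and reverse-engineer the $c_i$ from the displayed outer exponents; the content is identical.
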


\begin{proof}
For Theorem \ref{thm:new_blei_ps}, let $n=4$ and $k=2$, so that $M = 6$.
Let $q_1 = \cdots = q_6 = 12$, so that $\epsilon = 1 - \sum_{i=1}^6 q_i^{-1} = 1/2$.
Enumerate the two-element subsets of $\left\{1, 2, 3, 4\right\}$ by
$S_1 = \left\{1, 2\right\}, S_2 = \left\{1, 3\right\}, S_3 = \left\{1,4\right\},
S_4 = \left\{2,3\right\}, S_5 = \left\{2,4\right\},$ and $S_6 = \left\{3,4\right\}$.
Observe that
\begin{align*}
1 &\in S_1, S_2, S_3, &
2 &\in S_1, S_4, S_5, \\
3 &\in S_2, S_4, S_6, &
4 &\in S_3, S_5, S_6.
\end{align*}
Let $c_1 = c_6 = 1/2$, $c_2 = c_5 = 1/3$, and $c_3 = c_4 = 1/6$,
so that
\begin{equation*}
c_1 + c_2 + c_3
= c_1 + c_4 + c_5
= c_2 + c_4 + c_6
= c_3 + c_5 + c_6
= 1.
\end{equation*}
Now the result follows from Theorem \ref{thm:new_blei_ps}, noting that
$p_1 = p_6 = 3$, $p_2 = p_5 = 4$, and $p_3 = p_4 = 6$, and letting each function be $x$.
\end{proof}

\textit{Acknowledgements. }
Thanks to Gord Sinnamon for several helpful suggestions.

\bibliographystyle{amsplain}
\bibliography{HM-article}{}


\vspace{4ex}
\parbox{180pt}{
Wayne Grey \\
Department of Mathematics \\
Middlesex College \\
The University of Western Ontario \\
London, Ontario \\
Canada, N6A 5B7 \\
\email{wgrey@uwo.ca}
}


\end{document}